\let\oldbibliography\thebibliography
\renewcommand{\thebibliography}[1]{\oldbibliography{#1}
\setlength{\itemsep}{0pt}}
\renewcommand{\@Opargbegintheorem}[4]{%
  #4\trivlist\item[\hskip\labelsep{#3#2\@thmcounterend}]}
\spnewtheorem{conjecture}{Conjecture}{\bfseries}{\rmfamily}
\DeclarePairedDelimiter\floor{\lfloor}{\rfloor}
\newcommand{\B}{\textbf}
\newcommand{\baro}{\overline}
\begin{document}

\title{Cyclability, Connectivity and Circumference }
%
%
\author{Niranjan Balachandran\inst{1} \and
 Anish Hebbar\inst{2}}
%

%
\institute{Indian Institute of Technology Bombay, India  \and
 Indian Institute of Science, Bangalore, India\\
niranj@iitb.ac.in \quad  anishhebbar@iisc.ac.in 
}
\maketitle              
\begin{abstract}
In a graph $G$, a subset of vertices $S \subseteq V(G)$ is said to be cyclable if there is a cycle containing the vertices in some order. $G$ is said to be $k$-cyclable if any subset of $k \geq 2$ vertices is cyclable. If any $k$ \textit{ordered} vertices are present in a common cycle in that order, then the graph is said to be $k$-ordered. We show that when $k \leq \sqrt{n+3}$, $k$-cyclable graphs also have circumference $c(G) \geq 2k$, and that this is best possible. Furthermore when $k \leq \frac{3n}{4} -1$,  $c(G) \geq k+2$, and for $k$-ordered graphs we show $c(G) \geq \min\{n,2k\}$. We also generalize a result by Byer et al. \cite{Byer2007} on the maximum number of edges in nonhamiltonian $k$-connected graphs, and show that if $G$ is a $k$-connected graph of order $n \geq 2(k^2+k)$ with $|E(G)| > \binom{n-k}{2} + k^2$,  then the graph is hamiltonian, and moreover the extremal graphs are unique.

\keywords{Cyclability  \and Connectivity \and Circumference \and Hamiltonicity}
\end{abstract}
\section{Introduction}

We consider only finite, undirected, simple graphs throughout this paper. The vertex and edge sets of $G$ will be denoted by $V(G)$ and $E(G)$ respectively, the graph complement by $\baro{G}$. The length of the longest cycle in the graph $G$, also known as the circumference, will be denoted by $c(G)$. 
The minimum degree, independence number and connectivity of a graph will denoted by $\delta(G), \alpha(G)$ and  $\kappa(G)$ respectively. We will also use  $d_{H}(v)$ for the degree of $v$ in $H$. The set of neighbours of a vertex $v \in V(G)$ will denoted by $N(v)$, and the closed neighbourhood of $v$, viz. $N(v) \cup \{v\}$ will be denoted by $N[v]$. The join of two graphs $G_1, G_2$, denoted $G_1 \lor G_2$ is simply a copy of $G_1$ and $G_2$, with all edges between $V(G_1)$ and $V(G_2)$ also being present.\\ 


A subset $S \subseteq V(G)$ of vertices in a graph $G$ is said to be cyclable if $G$ has a cycle containing the vertices of $S$ in some order, possibly including other vertices. A graph $G$ is said to be $k$-cyclable if any $k \geq 2$ vertices of $G$ lie on a common cycle. Note that the problem of determining the hamiltonicity of a graph is a special case of cyclability, namely when $k = n$. Cyclability and connectivity are interlinked, as was shown by Dirac \cite{Dirac1960} who proved for every $k \geq 2$, $k$-connected graphs are also $k$-cyclable. In fact, for $k = 2$ connectivity and cyclability are equivalent, but in general for $k \geq 3$ it is not necessarily true that every $k$-cyclable graph is also $k$-connected, as can be seen by considering the graph $K_2 \lor 2K_k$ which has connectivity exactly $2$ and is also $k$-cyclable. For a brief survey of results involving conditions for cycles to contain a particular set, refer to \cite{Gould2009}.\\

There is a rich literature on conditions guaranteeing the presence of long cycles in graphs, the most classical one being that of Dirac \cite{Dirac1952} who showed that in $2$-connected graphs, the circumference is at least$c(G) \geq \min\{n, 2 \delta(G)\}$. Moreover, $k$-connected graphs have a circumference of at least $\min\{n,2k\}$ from an easy consequence of Menger's theorem, and this is tight. A famous result by Chvátal and Erd\H{o}s \cite{Chvatal1972} relates the connectivity and independence number of a graph to hamiltonicity, and says that if the connectivity of a graph $G$ is at least its independence number, then the graph is hamiltonian. However, not much is known when the requirement of connectivity is weakened to cyclability. Bauer et al. \cite{Bauer2000} obtained lower bounds for the length of the longest cycle in $3$-cyclable graphs in terms of the minimum degree and independence number, but not much else is known for $k$-cyclable graphs for arbitrary $k$.\\

 

Cyclability  has also received interest from an algorithmic and complexity theoretic point of view as it is a 'hard' parameter that can be thought of as a more quantitative measure of hamiltonicity. Since the classical HAMILTONIAN CYCLE problem is NP-complete, the problem of determining whether a graph is $k$-cyclable (CYCLABILITY) is NP-complete as well. 
The problem of determining whether a given subset $S$ of vertices is cyclable (TERMINAL CYCLABILITY) has been studied in the Parameterized Complexity framework (FPT) (parameterized by $|S|$) and the best known algorithm has  running time $O(2^{|S|} n^{O(1)})$ \cite{Bjorklund2012}. For some special classes of graphs such as interval graphs and bipartite permutation graphs, Crespelle and Golovach \cite{Crespelle2022} showed that both these problems can be solved in polynomial time. For $|S| = O((\log \log n)^{1/10})$, Kawarabayashi \cite{Kawarabayashi2008} obtained a polynomial time algorithm for TERMINAL CYCLABILITY.\\



Note that $k$-connectivity guarantees $c(G) \geq \min \{n,2k\}$ and also ensures $k$-cyclability. Thus, a natural question to ask is whether the same bound on the circumference can be obtained when the connectivity criteria is weakened to cyclability. When $k = n-1$, we would require any set of $n-1$ vertices of $G$ to lie on a common cycle. It turns out that in this case, it is not necessary that the graph is hamiltonian. Indeed, the existence of  hypohamiltonian graphs \cite{Doyen1975} of order $n$ is known for all $n \geq 18$. Our first  result in this paper gives a similar circumference bound for a wide range of $k$: 
\begin{theorem}\label{maintheorem}
Let $G$ be a $k$-cyclable graph, where $2 \leq k \leq n$. Then, 

\begin{equation*}
  c(G) \geq
    \begin{cases}
      2k & \text{if } k \leq \sqrt{n+3} \\
      k+2 & \text{if }  k \leq \frac{3n}{4} - 1
    \end{cases}       
\end{equation*}
Moreover, for $2 \leq k \leq \sqrt{n+3}$, this bound on the circumference is best possible.
\end{theorem}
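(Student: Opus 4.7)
The plan is to argue both bounds by contradiction. Fix a longest cycle $C$ of $G$ of length $c$ and set $R = V(G) \setminus V(C)$. Since $k$-cyclability trivially gives $c \ge k$, it remains to rule out $c \in [k, 2k-1]$ in the first claim and $c \in \{k, k+1\}$ in the second. In each case the goal is to exhibit a $k$-subset $S \subseteq V(G)$ admitting no common cycle in $G$, contradicting cyclability. Note that $k \le \sqrt{n+3}$, i.e.\ $n \ge k^2 - 3$, already forces $|R| = n - c \ge k^2 - 2k - 2$ whenever $c \le 2k - 1$, so there is plenty of room outside $C$.

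The central structural input is the standard longest-cycle lemma: for $u \in R$, no two neighbours of $u$ on $C$ can be consecutive on $C$, and more generally every internally-$R$ path $P$ joining vertices $v_i, v_j \in V(C)$ has length at most the shorter $C$-arc between $v_i$ and $v_j$, else splicing $P$ in place of that arc would give a cycle longer than $C$. Together with its two-path (crossing) version forbidding certain paired attachments through $R$, this drives each of the extension arguments below.

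For the main bound $c \ge 2k$ I would split into two regimes. When $|R| \ge k$, pick $S \subseteq R$ of size $k$; the forced cycle $D$ through $S$ has $|D| \le 2k - 1$, hence $|V(D) \cap R| \ge k$ and $|V(D) \cap V(C)| \le k - 1$, and $D$ decomposes into alternating $R$- and $C$-intervals. Summing the length bound of the longest-cycle lemma across the $R$-intervals of $D$ shows that the $C$-arcs reserved by these attachments cover more than $c$ vertices of $C$, a contradiction. When $|R| < k$, the deficit is filled by carefully chosen vertices of $V(C)$ and the same length-accounting applies. The weaker bound $c \ge k + 2$ admits a more elementary treatment: if $c \in \{k, k+1\}$, then taking $S = (V(C) \setminus \{v\}) \cup \{u\}$ with $u \in R$ and varying $v \in V(C)$ forces $u$ to have at least three neighbours on $C$ in a configuration that immediately extends $C$ via the longest-cycle lemma, contradicting maximality.

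For tightness at $2 \le k \le \sqrt{n+3}$, the graph $G = K_k \lor \baro{K}_{n-k}$ is $k$-connected, hence $k$-cyclable, and since every cycle entering the independent side must alternate with the $K_k$ side it has circumference exactly $2k$ once $n \ge 2k$ (a condition which follows from $k \le \sqrt{n+3}$ for $k \ge 3$, with the $k = 2$ case checked directly). The main technical obstacle I anticipate is the length-accounting when $D$ has few but long $R$-intervals: the single-interval bound from the longest-cycle lemma is then too weak, and one must average over several choices of $S$, exploiting the lower bound on $|R|$ afforded by $k \le \sqrt{n+3}$ to guarantee that at least one choice activates enough forbidden configurations to force a cycle longer than $C$.
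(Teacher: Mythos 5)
Your overall strategy (work directly with a longest cycle $C$, the exterior $R$, and arc-length bounds for paths attaching to $C$) is genuinely different from the paper's, which instead reduces immediately to the case $\alpha(G)\le k-1$ (an independent $k$-set on a common cycle already forces $c\ge 2k$) and then invokes the Fouquet--Jolivet bound $c(G)\ge \kappa(n+\kappa-\alpha)/\alpha$ of O, West and Wu (Theorem~\ref{westtheorem}) with $\kappa\ge 2$, and for the $k+2$ bound uses Dirac's theorem, the Bauer et al.\ bound for $3$-cyclable graphs, and a short case analysis on $\delta$ and $\alpha$. Unfortunately your version has genuine gaps at its central steps. For the $2k$ bound: when $S\subseteq R$, the forced cycle $D$ through $S$ need not meet $C$ at all, and if $V(D)\cap V(C)=\emptyset$ there are no $C$-arcs to account for and no contradiction arises ($|D|=k\le c$ is perfectly consistent with $C$ being longest). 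Even when $D$ does meet $C$, the ``shorter arcs'' reserved by distinct $R$-intervals of $D$ can overlap on $C$, so their lengths do not sum to at most $c$; handling overlapping and crossing attachments is exactly the hard part (Woodall/hopping-lemma territory), and you explicitly defer it (``the main technical obstacle I anticipate\ldots''). As written, the heart of the argument is missing, and the case $|R|<k$ is not addressed beyond a sentence.

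The $k+2$ part also does not close. If $c=k$ you correctly deduce that $u\in R$ has at least three neighbours on $C$, no two consecutive, but such a configuration does \emph{not} ``immediately extend $C$'': three pairwise non-consecutive attachment points on a cycle of length $k\ge 6$ are entirely consistent with $C$ being longest, so no contradiction follows without further input. For $c=k+1$ your set $S=(V(C)\setminus\{v\})\cup\{u\}$ has $k+1$ elements, and the analysis of the possible cycles $D$ (which may now contain one vertex outside $S$) is not carried out. The paper's proof of this regime genuinely needs the extra machinery (it splits on whether $k-1\le n-1-\delta$, and in the complementary case $k+\delta>n$ falls back on Theorem~\ref{bauertheorem}), which is evidence that the purely local argument you sketch is insufficient across the full range $k\le \tfrac{3n}{4}-1$. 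Your tightness example $K_k\lor\baro{K}_{n-k}$ is correct and essentially the same as the paper's $K_{k,n-k}$.
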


\noindent  Note that for $k\ge\frac{n}{2}$ it is still possible that one can have a bound of the form $c(G)\ge (1+\gamma)k$ for some fixed positive constant $\gamma < 1$ as long as $k \neq n - o(n)$.\\

A related notion is the orderedness of a graph, a strong hamiltonian property that was first introduced by Ng
and Schultz \cite{Ng1997}.   A graph $G$ is said to be $k$-ordered if any sequence of distinct vertices $T = \{v_1, \ldots, v_k\}$ are present in some common cycle in that order, possibly including other vertices. Note that $k$-ordered graphs are naturally also $k$-cyclable, and it is also easy to see that they are $(k-1)$-connected. For a comprehensive survey of results on $k$-ordered graphs, see \cite{Faudree2001}. We show that for $k$-orderedness, the same circumference bound as $k$-connectivity holds for all $2 \leq k \leq n$.

\begin{theorem}\label{orderedtheorem}
Let $G$ be a $k$-ordered graph, $2 \leq k \leq n$. Then, $c(G) \geq \min\{n,2k\}$.
\end{theorem}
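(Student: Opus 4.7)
The plan is to adapt the classical Menger-based proof that $k$-connectivity implies $c(G)\ge\min\{n,2k\}$, compensating for the fact (noted in the excerpt) that $k$-ordered graphs are only guaranteed to be $(k-1)$-connected. Suppose for contradiction that $G$ is $k$-ordered with $c(G)=\ell<\min\{n,2k\}$. Since $k$-orderedness implies $k$-cyclability we have $\ell\ge k$, and since $\ell<n$ we may fix a longest cycle $C$ and a vertex $v\in V(G)\setminus V(C)$. Using the $(k-1)$-connectivity of $G$ together with the standard fan lemma, produce $k-1$ internally disjoint paths $P_1,\dots,P_{k-1}$ from $v$ to distinct vertices $u_1,\dots,u_{k-1}$ of $C$, labelled in the cyclic order in which they appear on $C$; these split $C$ into arcs $A_1,\dots,A_{k-1}$ of total length $\ell$.

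Next I would run the standard arc-length dichotomy. If some $A_i$ has length $1$, then the cycle obtained by following $P_i$ from $v$ to $u_i$, traversing $C$ the long way to $u_{i+1}$, and returning along $P_{i+1}$ has length at least $\ell+1$, contradicting the maximality of $C$. So every $A_i$ has length at least $2$, which forces $\ell\ge 2(k-1)$ and hence $\ell\in\{2k-2,2k-1\}$. In this tight regime each arc has length exactly $2$ (with possibly one arc of length $3$ when $\ell=2k-1$), and it is precisely here that mere $(k-1)$-connectivity runs out of room.

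The last step invokes $k$-orderedness to break this rigid structure. Choose a midpoint $x$ of some length-$2$ arc $A_i=u_i x u_{i+1}$ and apply $k$-orderedness to a cyclic ordering of $k$ vertices engineered so that no cycle lying entirely in $C\cup \bigcup_j P_j$ of length $\le \ell$ can realize it — for example an ordering of the form $(u_1,\ldots,u_i,v,x,u_{i+1},\ldots,u_{k-2})$, which forces a $v$-to-$x$ route distinct from any arc of $C$ together with a $v$-to-$C$ route disjoint from the fan. The cycle $C^*$ guaranteed by $k$-orderedness, together with appropriate arcs of $C$ and fan paths $P_j$, is then assembled into a simple cycle of length at least $\ell+1$, giving the desired contradiction.

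The principal obstacle is simplicity of the stitched cycle: $C^*$ may re-enter $V(C)$ at vertices other than the prescribed ones, so the combined structure need not be a simple cycle. I would resolve this by truncating $C^*$ at its first re-entry into $V(C)$ (along a fixed traversal direction) and closing through the corresponding $C$-arc, a short local case analysis that separately handles the $\ell=2k-2$ case (where every arc has length exactly $2$) and the slightly easier $\ell=2k-1$ case (where the extra length-$3$ arc gives one unit of slack).
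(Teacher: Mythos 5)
Your reduction to the tight regime is fine: the fan of $k-1$ paths, the length-one-arc swap, and the conclusion that $\ell\ge 2(k-1)$, hence $\ell\in\{2k-2,2k-1\}$, all go through. But the final step --- the only place where $k$-orderedness is used beyond the $(k-1)$-connectivity it implies --- is not a proof. The cycle $C^*$ supplied by $k$-orderedness for your sequence $(u_1,\ldots,u_i,v,x,u_{i+1},\ldots,u_{k-2})$ is merely \emph{some} cycle containing these $k$ vertices in that cyclic order: its length can be as small as $k$, and its intersection with $C\cup\bigcup_j P_j$ is completely uncontrolled. All you can extract from it is, say, a $v$--$x$ path avoiding the other prescribed vertices; that path may still weave arbitrarily through $V(C)$, through the midpoints, and through the fan. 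The proposed repair --- truncate $C^*$ at its first re-entry into $V(C)$ and close through ``the corresponding $C$-arc'' --- does not in general produce a cycle of length $\ge\ell+1$ (the truncated piece and the closing arc can both be short, and nothing forces the result to pick up enough of $C$). Since this assembly is exactly the point of the argument and is left as an acknowledged obstacle, the proof is incomplete as it stands.

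You should also know that the heavy machinery is unnecessary here: the paper's proof is three lines. If $\alpha(G)\ge k$, take $k$ pairwise nonadjacent vertices; by $k$-cyclability they lie on a common cycle, and since no two of them can be consecutive on it, that cycle has length at least $2k$. Otherwise $\alpha(G)\le k-1\le \kappa(G)$ (using Lemma \ref{schultztheorem}), and the Chv\'atal--Erd\H{o}s theorem (Theorem \ref{erdostheorem}) makes $G$ hamiltonian, so $c(G)=n$. If you want to salvage your approach, the fastest fix is to graft in this independence-number dichotomy: you may assume $\alpha\le k-1$ from the outset, at which point Chv\'atal--Erd\H{o}s already finishes the job and the case $\ell\in\{2k-2,2k-1\}$ never arises.
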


Our second pursuit in this paper is to obtain Tur\'an-type results for the circumference of $k$-connected graphs, specifically the maximum number of edges in nonhamiltonian $k$-connected graphs. A classical result states that if $G$ is a graph of order $n$ with $|E(G)| > \binom{n-1}{2} + 1$, then $G$ is hamiltonian. This was generalized by \cite{Byer2007} for $k \leq 3$, where they showed that if $G$ is $k$-connected and satisfies $|E(G)| > \binom{n-k}{2} + k^2$ with $n$ sufficiently large, then the graph is hamiltonian and the extremal graphs are unique. We further generalize their result and extend it to any $k$ satisfying $n \geq 2(k^2+k)$.

\begin{theorem} \label{kconhamtheorem}
Let $G$ be a $k$-connected graph of order $n \geq 2(k^2+k)$. If $|E(G)| > \binom{n-k}{2} + k^2$, then $G$ is hamiltonian. Moreover, the extremal graphs are unique.
\end{theorem}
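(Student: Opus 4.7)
My plan is to prove the theorem via Chv\'atal's hamiltonicity criterion on degree sequences, extending Erd\H{o}s's classical edge-extremal result for non-hamiltonian graphs. Suppose $G$ is $k$-connected and non-hamiltonian, with degree sequence $d_1 \leq d_2 \leq \cdots \leq d_n$. By the contrapositive of Chv\'atal's theorem, there exists an integer $i$ with $1 \leq i < n/2$ such that $d_i \leq i$ and $d_{n-i} \leq n - i - 1$. Since the $k$-connectivity of $G$ forces $\delta(G) \geq k$, every such witness satisfies $i \geq k$.

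Bounding the degree sum according to the profile this witness imposes, the first $i$ vertices contribute at most $i^2$, the middle $n - 2i$ vertices contribute at most $(n - 2i)(n - i - 1)$, and the last $i$ contribute at most $i(n - 1)$, yielding
\[
2|E(G)| \leq f(i) := i^2 + (n - 2i)(n - i - 1) + i(n - 1) = 3i^2 + (1 - 2n)\,i + (n^2 - n).
\]
A direct calculation confirms $f(k)/2 = \binom{n - k}{2} + k^2$. Since $f$ is a strictly convex quadratic in $i$, its maximum over $[k, n/2]$ is attained at an endpoint, and the comparison $f(k) \geq f(n/2)$ reduces to $n^2 - (8k + 2)n + (12k^2 + 4k) \geq 0$, a quadratic in $n$ with roots $2k$ and $6k + 2$. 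The hypothesis $n \geq 2(k^2 + k)$ guarantees $n \geq 6k + 2$ whenever $k \geq 3$, while the remaining cases $k \leq 2$ are already handled in \cite{Byer2007}. Therefore $|E(G)| \leq f(i)/2 \leq f(k)/2 = \binom{n - k}{2} + k^2$, which contradicts the edge hypothesis and establishes hamiltonicity.

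For uniqueness, suppose $|E(G)| = \binom{n - k}{2} + k^2$ with $G$ non-hamiltonian. Strict convexity of $f$ together with $n \geq 6k + 2$ gives $f(k) > f(i)$ strictly for all $i \in (k, n/2)$, so the Chv\'atal witness must be $i = k$; equality in the degree-sum bound then pins the degree sequence of $G$ to $\underbrace{k, \ldots, k}_{k},\, \underbrace{n - k - 1, \ldots, n - k - 1}_{n - 2k},\, \underbrace{n - 1, \ldots, n - 1}_{k}$. The $k$ vertices of degree $n - 1$ form a universal set $A$. Every degree-$k$ vertex $v$ satisfies $A \subseteq N(v)$ (each element of $A$ is adjacent to $v$), so $N(v) = A$; the $k$ degree-$k$ vertices thus form an independent set $D$ with no edges outside $A$. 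Each of the remaining $n - 2k$ vertices has $k$ neighbors in $A$, none in $D$, and so has its remaining $n - 2k - 1$ neighbors among themselves, forcing them to induce $K_{n - 2k}$. Hence $G \cong K_k \lor (K_{n - 2k} \cup \overline{K}_k)$, the unique extremal graph.

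The main obstacle is the convexity endpoint comparison: one has to show $f(i) \leq f(k)$ throughout the admissible range $[k, n/2)$, which is precisely what the hypothesis $n \geq 2(k^2 + k)$ (via the implied $n \geq 6k + 2$) is tuned to guarantee; for smaller $k$ the parabola $f$ can exceed $f(k)$ on the interior and the argument fails, forcing one to fall back on \cite{Byer2007}. The subsequent structural reconstruction is a largely mechanical propagation of the degree constraints outward from the universal set $A$.
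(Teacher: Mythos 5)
Your proof is correct, and it takes a genuinely different and considerably more elementary route than the paper's. You invoke Chv\'atal's degree-sequence criterion: a non-hamiltonian $G$ admits a witness $i<n/2$ with $d_i\le i$ and $d_{n-i}\le n-i-1$; the bound $\delta\ge k$ (the only consequence of $k$-connectivity you use) forces $i\ge k$; the degree-sum estimate $2|E(G)|\le f(i)=3i^2+(1-2n)i+n^2-n$ with $f(i)/2=\binom{n-i}{2}+i^2$ reduces everything to the convexity comparison $f(k)\ge f(n/2)$, which holds exactly when $n\ge 6k+2$ and hence for all $k\ge 3$ under $n\ge 2(k^2+k)$; and equality pins the degree sequence to $k^{(k)},(n-k-1)^{(n-2k)},(n-1)^{(k)}$, from which the reconstruction of $K_k\vee(K_{n-2k}\cup\overline{K_k})$ is forced. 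This is in effect a rederivation of Erd\H{o}s's classical edge bound for non-hamiltonian graphs of minimum degree at least $k$, together with its extremal characterization, so you prove a formally stronger statement (minimum degree $k$ suffices) and your argument would tolerate a threshold near $6k$ rather than $2(k^2+k)$. The paper instead works through the Bondy--Chv\'atal closure, the Chv\'atal--Erd\H{o}s theorem to extract a $(k+1)$-element independent set, the complement-edge counts of Lemmas \ref{byertheorem} and \ref{byertheorem2}, and the $\sigma_{k+1}$ hamiltonicity criterion of Theorem \ref{haotheorem}, with a two-case split on the size of $\alpha$ relative to $n$; that machinery is heavier but is the natural continuation of the framework of \cite{Byer2007}. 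Two small points to tidy: the witness ranges over integers $i\le\lceil n/2\rceil-1$, so comparing against $f(\lfloor (n-1)/2\rfloor)$ rather than $f(n/2)$ would also cover $k=2$ directly instead of deferring to \cite{Byer2007}; and the uniqueness claim should be closed off by observing that $K_k\vee(K_{n-2k}\cup\overline{K_k})$ is indeed $k$-connected and non-hamiltonian (deleting the $K_k$ leaves $k+1$ components).
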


The rest of the paper is organized as follows. We lay out some preliminaries in the next section, and give the proofs of Theorems \ref{maintheorem}, \ref{orderedtheorem}, and \ref{kconhamtheorem} in the following section. We conclude with some remarks and open questions.

\section{Preliminaries}

When the underlying graph is clear, we will use $\delta, \kappa, \alpha$ instead of $\delta(G), \kappa(G), \alpha(G)$ for brevity, and also omit the subscript in $d_H(v)$.
We also use the following well-known lemma attributed to Dirac repeatedly throughout the paper, and provide an outline of the proof for completeness.
\begin{lemma}[\cite{Dirac1960}] \label{kcontheorem}
Any $k$-connected graph $G$ is $k$-cyclable. Moreover, it satisfies $c(G) \geq \min\{n,2k\}$
\end{lemma}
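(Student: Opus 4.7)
The plan is to handle both conclusions using the fan version of Menger's theorem: in any $k$-connected graph $G$, for a vertex $v$ and a set $S \subseteq V(G)\setminus\{v\}$, there exist $\min(k,|S|)$ internally disjoint paths from $v$ to $S$ ending at that many distinct vertices of $S$.

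For the $k$-cyclability statement I would induct on $k$, with the base case $k=2$ being the standard equivalence between $2$-connectivity and every pair of vertices lying on a common cycle. For the inductive step, given vertices $v_1,\ldots,v_k$, apply the inductive hypothesis (using that $G$ is also $(k-1)$-connected) to obtain a cycle $C$ through $v_1,\ldots,v_{k-1}$. If $v_k\in V(C)$ we are done; otherwise the fan lemma supplies $m=\min(k,|V(C)|)\geq k-1$ internally disjoint paths $P_1,\ldots,P_m$ from $v_k$ to distinct points $u_1,\ldots,u_m$ of $C$, listed in cyclic order along $C$. These points split $C$ into $m$ arcs, and since only $k-1$ of the marked vertices can occupy arc interiors, the pigeonhole principle delivers an arc, say from $u_i$ to $u_{i+1}$, whose interior contains none of $v_1,\ldots,v_{k-1}$. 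Replacing that arc with the detour $u_i\to v_k\to u_{i+1}$ through $P_i$ and $P_{i+1}$ yields a cycle through all of $v_1,\ldots,v_k$.

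For the circumference bound I would split on whether $n\leq 2k$. If $n\leq 2k$, then any two nonadjacent vertices $u,w$ satisfy $d(u)+d(w)\geq 2\delta(G)\geq 2k\geq n$, so Ore's theorem gives Hamiltonicity and $c(G)=n=\min(n,2k)$. If $n>2k$, let $C$ be a longest cycle and assume for contradiction $|C|<2k$; the cyclability just proved forces $|C|\geq k$, and $|C|<n$ supplies a vertex $v\notin V(C)$. The fan lemma now produces $k$ internally disjoint $v$-$C$ paths meeting $C$ at distinct points $u_1,\ldots,u_k$ in cyclic order, splitting $C$ into $k$ arcs of total length $|C|<2k$, so some arc has length at most $1$; distinctness of the $u_i$ forces length exactly $1$. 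Rerouting that single-edge arc through $v$ yields a cycle of length at least $|C|-1+2>|C|$, contradicting the maximality of $C$.

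The main conceptual move in both parts is the same pigeonhole/rerouting trick, so once the fan lemma is in place the argument is quite short. The edge case I would double-check is the inductive step when $|V(C)|=k-1$: then the fan lemma only yields $k-1$ attachment points, which must coincide with the marked vertices themselves, so no $v_j$ sits in any arc interior; the rerouting still works because the $k-1$ arcs have total length $k-1$ and hence each has length exactly $1$, so any of them can be replaced by the detour through $v_k$ to produce a cycle containing $v_1,\ldots,v_k$.
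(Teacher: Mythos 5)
Your proposal is correct and follows essentially the same route as the paper's sketch: both parts rest on the Menger fan lemma, a pigeonhole count of attachment points against arcs, and rerouting a marked-vertex-free (respectively, length-one) arc through the outside vertex. The only cosmetic differences are that you set up the cyclability part by induction on $k$ where the paper instead takes a cycle maximizing the number of covered target vertices (your handling of the $|V(C)|=k-1$ edge case correctly closes the one gap the induction introduces), and that you dispatch the $n\leq 2k$ case of the circumference bound via Ore's theorem rather than the uniform pigeonhole argument.
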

\begin{proof}[Proof Sketch]
Suppose some subset $S$ of vertices with $|S| = k$ was not fully contained in any cycle. Then, take a cycle $C$ containing as many of the vertices of $S$ as possible, and pick some $v \in S$ that is not in $C$. By Menger's theorem, we can choose $k$ vertex-disjoint paths from $v$ to the cycle $C$, and these endpoints divide $C$ into $k$ segments. Since there are strictly less than $k$ vertices of $S$ in $C$, one of the segments does not contain any vertex from $S$, and thus we can extend this segment with the $2$ disjoint paths from $v$ at the ends of the segment to obtain a cycle containing more vertices of $S$, contradiction. \\
Now consider the longest cycle $C$ in $G$ and suppose its length is strictly less than $\min\{n,2k\}$. Pick some $v \in V(G)$ not in $C$, and by Menger's theorem there are $k$ vertex disjoint paths from $v$ to $C$. By the pigeonhole principle, some two endpoints of these $k$ paths must be adjacent on the cycle $C$, giving a contradiction as we can replace the edge between these endpoints with the $2$ paths to obtain a longer cycle. \qed
\end{proof}

A famous result by  Chvátal and Erd\H{o}s  states the following

\begin{theorem}[\cite{Chvatal1972}] \label{erdostheorem}
If in a graph $G$, $\alpha(G) \leq \kappa(G)$, then $G$ is hamiltonian.
\end{theorem}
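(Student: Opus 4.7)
The plan is the classical extremal-cycle argument: assume a longest cycle $C$ in $G$ is not Hamiltonian, then manufacture an independent set of size $\kappa(G)+1$, which contradicts $\alpha(G)\le\kappa(G)$. Trivial edge cases ($G$ complete, or $n\le 2$) are handled directly, so I assume $n\ge 3$ and $\alpha(G)\ge 2$, which forces $\kappa(G)\ge 2$, so $G$ contains at least one cycle and a longest cycle $C$ is well defined.

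First I would set up the pointers. Fix an orientation of $C$ and write $v^+$ for the successor of $v$ on $C$. Suppose $V(C)\ne V(G)$ and pick $v\in V(G)\setminus V(C)$. Since $G$ is $\kappa$-connected, Menger's theorem (applied to $v$ and $V(C)$ after contracting $C$ to a single vertex, or via its vertex-fan form) yields $\kappa$ internally disjoint $v$--$C$ paths $P_1,\dots,P_\kappa$ whose endpoints $v_1,\dots,v_\kappa$ on $C$ are distinct. Label them so that $v_1,\dots,v_\kappa$ appear in this cyclic order along $C$.

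The core step is to prove that $S:=\{v,\,v_1^+,v_2^+,\dots,v_\kappa^+\}$ is an independent set of size $\kappa+1$. The $v_i^+$ are pairwise distinct (they are successors of distinct vertices of $C$) and none of them equals $v$ (they lie on $C$). I establish non-adjacency in two cases:
\begin{enumerate}
\item $v\,v_i^+\notin E(G)$: otherwise form the cycle that traverses $P_i$ from $v$ to $v_i$, then goes around $C$ in the reverse direction from $v_i$ all the way to $v_i^+$, then uses the edge $v_i^+v$. This cycle contains every vertex of $C$ plus the internal vertices of $P_i$, hence is strictly longer than $C$, a contradiction.
\item $v_i^+\,v_j^+\notin E(G)$ for $i\ne j$: otherwise build a cycle by concatenating (a) $P_i$ from $v$ to $v_i$, (b) the arc of $C$ from $v_i$ traversed opposite to the orientation, ending at $v_j^+$ (this arc avoids the edges $v_iv_i^+$ and $v_jv_j^+$), (c) the chord $v_j^+v_i^+$, (d) the arc of $C$ from $v_i^+$ in the forward direction to $v_j$, and (e) $P_j$ back to $v$. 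This uses all of $V(C)$ together with the internal vertices of $P_i$ and $P_j$, so the resulting cycle is strictly longer than $C$, again a contradiction.
\end{enumerate}
Once $S$ is shown to be independent, $|S|=\kappa(G)+1>\alpha(G)$ contradicts the hypothesis, so in fact $V(C)=V(G)$ and $G$ is Hamiltonian.

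The main obstacle, as always in this argument, is the bookkeeping for case (2): one must verify that the two arcs of $C$ described really are internally vertex-disjoint and cover all of $V(C)$ while omitting exactly the two edges $v_iv_i^+$ and $v_jv_j^+$, so that the gained length from the internal vertices of $P_i,P_j$ strictly exceeds the single edge $v_i^+v_j^+$ used in their place. Writing the construction with explicit forward/backward traversals of $C$ makes this transparent and avoids the usual pitfall of accidentally reusing an edge or skipping a vertex.
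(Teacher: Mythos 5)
The paper does not prove this statement at all: it is quoted as Theorem~\ref{erdostheorem} with a citation to Chv\'atal and Erd\H{o}s, and is then used as a black box (e.g.\ in the proofs of Theorem~\ref{orderedtheorem} and Lemma~\ref{byertheorem}). So there is no in-paper argument to compare against; what you have written is the classical Chv\'atal--Erd\H{o}s proof, and it is essentially correct. Your case (1) and case (2) cycle surgeries are the right ones, and your case (2) construction even degenerates gracefully in the situation $v_i^+=v_j$ (there the ``chord'' is the cycle edge $v_jv_j^+$ and arc (d) is a single vertex, but the resulting cycle still strictly contains $V(C)\cup\{v\}$, so one concludes that no two fan endpoints are consecutive on $C$ rather than losing independence of $S$); it would be cleaner to dispose of that configuration explicitly first, as the standard write-ups do. Two small points to tighten: (i) the fan version of Menger's theorem only yields $\kappa$ paths with distinct endpoints on $C$ if $|V(C)|\ge\kappa$, which you should justify --- it follows from Lemma~\ref{kcontheorem} of this paper ($c(G)\ge\min\{n,2\kappa\}$), or directly by noting that a maximum fan of size $|V(C)|<\kappa$ would have two consecutive endpoints and hence produce a longer cycle; (ii) you should state that each fan path meets $C$ only in its terminal vertex (this is part of the fan lemma and is what guarantees the internal vertices you splice in are new). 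Neither is a real gap, just bookkeeping worth recording.
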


A natural generalization of the above is to flip the condition $\alpha(G) \leq \kappa(G)$, and instead ask for lower bounds on the circumference of a graph $G$ where $\alpha(G) \geq \kappa(G)$. Foquet and Jolivet \cite{J.L.Fouquet} conjectured the following, which was later proven by Suil O, Douglas B. West and Hehui Wu.

\begin{theorem}[\cite{O2011}]\label{westtheorem}
If $G$ is a $k$-connected $n$-vertex graph with independence number $\alpha$ and $\alpha \geq k$, then $G$ has a cycle of length at least $\frac{k(n+k-\alpha)}{\alpha}$.
\end{theorem}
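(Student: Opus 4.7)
The plan is to establish the contrapositive by constructing a large independent set. Rewriting the desired bound $c(G) \geq \frac{k(n+k-\alpha)}{\alpha}$ as $\alpha(c+k) \geq k(n+k)$, it suffices to show that if $G$ is $k$-connected with longest cycle length $c$, then $\alpha(G) \geq \frac{k(n+k)}{c+k}$. The trivial case $c = n$ reduces to $(n-k)(\alpha-k) \geq 0$, which holds since $\alpha \geq k$, so assume $c < n$ and let $C$ be a longest cycle with $R = V(G) \setminus V(C)$, $r = |R| = n-c$. By Lemma \ref{kcontheorem} we also have $c \geq 2k$, so the arcs used below all have length at least one.

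The approach is a longest-cycle argument in the spirit of Chv\'atal--Erd\H{o}s. For each component $D$ of $G[R]$, Menger's theorem yields $k$ internally disjoint paths from $D$ to $C$ with distinct endpoints $x_1,\dots,x_k$ in cyclic order on $C$. The cycle-augmentation argument from the proof of Lemma \ref{kcontheorem} shows that the cyclic successors $x_i^+$ are pairwise non-adjacent and have no neighbour in $D$; otherwise one reroutes through $D$ to build a cycle longer than $C$. Walking further into each arc $[x_i^+, x_{i+1}]$ produces additional non-adjacency relations, encoding the fact that long arcs support many forbidden chord configurations.

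The central quantitative step is to aggregate these local independences across the components of $R$ and across the $k$ arcs of $C$ into a single global independent set of size at least $\frac{k(n+k)}{c+k}$. The natural mechanism is an averaging argument: consider all $c$ cyclic shifts of the labelling of $C$ together with a weighted count that assigns one unit to each vertex of $R$ and, in expectation, a fraction $\frac{k}{c+k}$ of each cycle vertex. The factor $c+k$ in the denominator arises because each cyclic shift produces $k$ distinguished arc positions along the $c$-cycle, effectively creating $c+k$ slots against which the $n$ vertices must be distributed; some shift must therefore attain the bound.

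The main obstacle is ensuring the pieces fit together globally. Successors chosen for different components of $R$ may be mutually adjacent, and short arcs can fail to contribute any successor at all, so the naive union of local independent sets is not independent. Overcoming this requires a global minimality hypothesis on the entire Menger path system (for instance, minimizing the total internal length of all attachment paths summed across components) and a case analysis distinguishing short arcs (which must contribute independent vertices from $R$) from long arcs (which contribute them from $C$). This refined path-selection and arc-balancing argument, carried out in \cite{O2011}, is precisely what yields the tight denominator $c+k$ and hence the exact bound claimed.
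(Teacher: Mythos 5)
This statement is not proved in the paper at all: it is the Fouquet--Jolivet conjecture, quoted as a known theorem with a citation to \cite{O2011}, where O, West and Wu resolved it after it had been open since 1976. So there is no in-paper argument to compare against; the only question is whether your proposal constitutes a proof on its own. It does not. The setup (rewriting the bound as $\alpha(c+k)\geq k(n+k)$, taking a longest cycle $C$, attaching each component of $G[R]$ to $C$ by $k$ Menger paths, and observing that the cyclic successors of the attachment points together with a vertex of the component form an independent set) is the standard Chv\'atal--Erd\H{o}s machinery, and it is correct as far as it goes --- but it only yields $\alpha\geq k+1$, which is nowhere near the required $\alpha\geq k(n+k)/(c+k)$. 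Everything beyond that point in your write-up is aspiration rather than argument: the ``averaging over all $c$ cyclic shifts'' with each cycle vertex counted with weight $k/(c+k)$ is asserted, not carried out; no independent set is actually exhibited, no count is actually performed, and the claimed origin of the denominator $c+k$ (``$c+k$ slots'') is a heuristic, not a derivation. You then explicitly concede the decisive step by writing that the ``refined path-selection and arc-balancing argument, carried out in \cite{O2011}, is precisely what yields the tight denominator.'' Deferring the core of the proof to the reference being proved is a genuine gap, not a stylistic shortcut.

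To be concrete about why the missing step is the whole difficulty: merging the local independent sets produced by different components of $R$, and extracting independent vertices from arcs that are too short to supply non-adjacent successors, is exactly what kept this conjecture open for thirty-five years; the actual proof in \cite{O2011} is a lengthy structural analysis of the longest cycle, not a one-line averaging argument. (A small algebraic slip as well: for $c=n$ the inequality $\alpha(c+k)\geq k(n+k)$ reduces to $(n+k)(\alpha-k)\geq 0$, not $(n-k)(\alpha-k)\geq 0$, though both hold under $\alpha\geq k$.) For the purposes of this paper the correct treatment is simply to cite the theorem, as the authors do; if you want to supply a proof, you would need to actually construct the large independent set or reproduce the argument of \cite{O2011}, neither of which your proposal does.
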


The following result by Dirac is well-known and was a precursor to a number of results involving the length of the longest cycle in a graph.

\begin{theorem}[\cite{Dirac1952}]\label{diractheorem}
If $G$ is $2$-connected and has minimum degree $\delta$, $c(G) \geq \min\{2 \delta,n\}$.
\end{theorem}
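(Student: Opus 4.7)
The plan is to argue by contradiction by analyzing a longest path in $G$. Let $P = x_0 x_1 \ldots x_p$ be such a path; by maximality, $N(x_0), N(x_p) \subseteq V(P)$, which forces $p \geq \delta$. A key preliminary observation is that any cycle containing all of $V(P)$ must span $V(G)$, for otherwise connectivity would provide an edge from an external vertex to $V(P)$, yielding a path longer than $P$ and contradicting its maximality.

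When $p \leq 2\delta - 1$, a P\'osa-style rotation produces a cycle on $V(P)$ of length $p + 1$. Setting
\[
S = \{i : x_0 x_{i+1} \in E\}, \qquad T = \{i : x_i x_p \in E\},
\]
both are subsets of $\{0, 1, \ldots, p-1\}$ of size at least $\delta$, so $|S| + |T| \geq 2\delta > p$ and the pigeonhole principle forces $S \cap T \neq \emptyset$. Any $i \in S \cap T$ gives the cycle $x_0 x_{i+1} x_{i+2} \ldots x_p x_i x_{i-1} \ldots x_1 x_0$ of length $p + 1$, which by the preliminary observation must be Hamiltonian; hence $c(G) = n \geq \min\{n, 2\delta\}$.

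When $p \geq 2\delta$, the pigeonhole above no longer applies and the $2$-connectivity hypothesis becomes essential. Here I would consider the extremal neighbor indices $i^* = \max\{i : x_0 x_i \in E\}$ and $j^* = \min\{j : x_j x_p \in E\}$, which satisfy $i^* \geq \delta$ and $j^* \leq p - \delta$; when $j^* < i^*$ the cycle $x_0 x_1 \ldots x_{j^*} x_p x_{p-1} \ldots x_{i^*} x_0$ has length $p + j^* - i^* + 2$. If this cycle is not already of length at least $2\delta$, I would invoke Menger's theorem on the pair $(x_0, x_p)$ to produce two internally disjoint $x_0$-$x_p$ paths in $G$, and combine the second path with appropriately chosen segments of $P$ to build the required cycle.

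The main obstacle is showing $i^* - j^* \leq p - 2\delta + 2$ in the second case, which does not follow from the basic degree bounds alone and is precisely where $\kappa(G) \geq 2$ is crucially used: the example $G = K_{\delta, n-\delta}$ with $n > 2\delta$ shows both that the bound is tight and that mere minimum-degree conditions are insufficient, while a ``broom'' (a long path with $\delta$ pendants at one end) demonstrates that the $2$-connectivity hypothesis cannot be dropped. The careful combinatorial balancing of neighbor positions, path length $p$, and the two internally disjoint paths produced by Menger is the technically delicate portion of the proof.
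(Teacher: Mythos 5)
The paper does not actually prove this statement: Theorem~\ref{diractheorem} is quoted from Dirac's 1952 paper as a known tool, so your proposal has to stand on its own. Its first half does: for a longest path $P = x_0x_1\ldots x_p$ with $p \leq 2\delta-1$, the crossing-pair pigeonhole argument yields a cycle on all of $V(P)$, and your preliminary observation correctly upgrades it to a Hamiltonian cycle (only connectedness, not $2$-connectivity, is needed there). The problem is that the case $p \geq 2\delta$ is the entire content of Dirac's circumference bound, and you have not proved it. Two concrete failures. First, the cycle $x_0x_1\ldots x_{j^*}x_px_{p-1}\ldots x_{i^*}x_0$ exists only when $j^* < i^*$, and that can fail outright: take two copies of $K_{\delta+1}$ joined by two independent edges. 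This graph is $2$-connected with minimum degree $\delta$, a longest path traverses one clique and then the other, and one computes $i^* = \delta < \delta+1 = j^*$; your construction produces nothing, and you never address the subcase $j^* \geq i^*$. Second, even when $j^* < i^*$, the inequality $i^* - j^* \leq p - 2\delta + 2$ is exactly the statement to be proved, and you concede you cannot derive it from the degree bounds; ``invoke Menger on $(x_0,x_p)$ and combine with appropriately chosen segments'' is a placeholder, not an argument --- which segments, and why the resulting cycle has length at least $2\delta$, is precisely the delicate step.

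The missing idea is that the standard proofs do not bound $i^* - j^*$ at all; they use $2$-connectivity through a different mechanism. One takes a longest cycle $C$ with $|C| < \min\{n, 2\delta\}$, uses Menger to attach a fan of two internally disjoint paths from a component of $G - V(C)$ to $C$, and then exploits the facts that the attachment vertices are pairwise nonconsecutive on $C$ and that each arc between consecutive attachments must be at least as long as the corresponding detour through the outside component; counting the neighbours of an outside vertex then forces $|C| \geq 2\delta$. Equivalently, one can invoke Bondy's lemma that a $2$-connected graph with a longest path $x_0\ldots x_p$ contains a cycle of length at least $\min\{p+1,\, d(x_0)+d(x_p)\}$. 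Some version of that argument has to replace the third paragraph of your proposal before it is a proof.
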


Note that $2$-connectivity is equivalent to $2$-cyclability. Bauer et al. obtained a bound on the circumference of $3$-cyclable graphs in terms of the minimum degree and independence number.

\begin{theorem}[\cite{Bauer2000}]\label{bauertheorem}
 If $G$ is $3$ cyclable, then $$c(G) \geq min\{n, 3\delta - 3 , n + \delta - \alpha\}.$$
\end{theorem}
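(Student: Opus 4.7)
The plan is to argue by contradiction: suppose $G$ is $3$-cyclable but has a longest cycle $C$ of length $c < \min\{n, 3\delta - 3, n + \delta - \alpha\}$. Since $c < n$ we can pick $v \in V(G) \setminus V(C)$, and this $v$ will drive all the constructions. Because $3$-cyclability implies $2$-connectivity, Theorem~\ref{diractheorem} already yields $c \geq \min\{n, 2\delta\}$, so the real work is to extract the extra $\delta - 3$ from the $3$-cyclability hypothesis and to separately establish the independence-number bound.

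For the bound $c \geq 3\delta - 3$, I would mirror Bondy's proof of the analogous result for $3$-connected graphs, replacing $3$-connectivity with $3$-cyclability at the crucial step. The rough shape is: use $3$-cyclability applied to triples $\{v, x, y\}$ with $x, y \in V(C)$ to produce three internally-disjoint $v$--$C$ attachments $P_1, P_2, P_3$ with endpoints $x_1, x_2, x_3$ on $C$, splitting $C$ into three arcs. Then a standard longest-cycle "jumping" argument shows each arc has length at least $\delta - 1$: otherwise one can use a neighbor of an interior vertex of the short arc to shortcut past two attachments and splice in the remaining $P_i$, producing a cycle longer than $C$. Summing over the three arcs gives $c \geq 3(\delta - 1) = 3\delta - 3$.

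For the bound $c \geq n + \delta - \alpha$, I would exploit the independence number. Let $I = V(G) \setminus V(C)$, so $|I| = n - c$, and the goal is to show $|I| \leq \alpha - \delta$. The strategy is to build an independent set of size exceeding $\alpha$ by starting from a well-chosen subset of $I$ and adjoining carefully selected non-neighbors on $C$. Applying $3$-cyclability to triples drawn from $I \cup V(C)$ forces the neighborhoods of vertices of $I$ to be spread out along $C$ (since otherwise a cluster of $I$-neighbors in a short arc of $C$ would, combined with $v$, allow cycle extension), which keeps the complement of $N(I)$ on $C$ large. A greedy extraction from this complement, together with vertices of $I$, then produces an independent set of size greater than $\alpha$, the required contradiction.

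The main obstacle, and what distinguishes this from Bondy's theorem, is the first step of the $3\delta - 3$ argument: producing three internally-disjoint attachments of $v$ to $C$ using only the cycle-through-triples condition, since Menger's theorem is not available. If there is a cut of size $2$ separating $v$ from $C$, one must exhibit a triple $\{v, u, w\}$ whose required common cycle would have to cross this cut at three points, yielding a contradiction. Handling the degenerate configurations here (e.g.,\ when the two cut vertices lie on every cycle through $v$ in a rigid way) is where the bulk of the technical effort is likely to reside, and similar delicacy will be needed to convert $3$-cyclability into the spread-out neighborhood property used in the independence-number bound.
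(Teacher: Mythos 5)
The paper does not actually prove Theorem~\ref{bauertheorem}: it is quoted from \cite{Bauer2000} and used as a black box, so there is no in-paper argument to measure yours against. Judged on its own, your proposal has a structural flaw that is more serious than the obstacle you flag at the end. You propose to establish the two nontrivial bounds \emph{separately}, i.e.\ to prove $c(G) \geq \min\{n, 3\delta-3\}$ and $c(G) \geq \min\{n, n+\delta-\alpha\}$ each on its own; both of these stronger statements are false, even for $3$-connected graphs. For the first, $K_{4,n-4}$ with $n$ large is $3$-cyclable with $\delta = 4$ and $c = 8 < 9 = 3\delta-3$. Here a vertex $v$ off the longest cycle has four attachments splitting $C$ into arcs of length $2 < \delta - 1$, and the ``jumping argument'' cannot lengthen them: the unique interior vertex of a short arc has all of its neighbours among the attachment points, so there is nothing to shortcut with. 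This is precisely the configuration in which the $n+\delta-\alpha$ term of the theorem takes over. For the second, $K_3 \vee 4K_m$ ($m \geq 2$) is $3$-cyclable with $c = 3m+3 = 3\delta-3$, while $n = 4m+3$ and $n+\delta-\alpha = 5m+1$ are both strictly larger. The three-way minimum is therefore essential: a correct proof must interleave the two mechanisms, harvesting pairwise non-adjacent vertices (to force $\alpha$ large) exactly at the points where the arc-lengthening argument breaks down, rather than running the degree argument and the independence argument in parallel.

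The obstacle you do identify --- extracting three internally disjoint $v$--$C$ attachments from $3$-cyclability alone --- is also real and unresolved. A $3$-cyclable graph can have connectivity exactly $2$, and $v$ can sit behind a $2$-cut $\{a,b\}$: every cycle through a triple $\{v,u,w\}$ may legitimately enter and leave $v$'s side through $a$ and $b$, so no contradiction of the kind you describe arises, and this case needs a genuinely separate argument. As written, the proposal is a plan whose two central claims are respectively false as stated and explicitly deferred, so it does not constitute a proof of the theorem.
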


Ng and Schultz studied a related hamiltonian property termed $k$-orderedness, and showed the following connectivity result. Once again, we include the proof for completeness.

\begin{lemma}[\cite{Ng1997}]\label{schultztheorem}
Let $G$ be a $k$-ordered graph. Then, $G$ is $(k-1)$-connected.
\end{lemma}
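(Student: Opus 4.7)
The plan is to argue by contradiction: suppose $G$ is $k$-ordered but $\kappa(G) \leq k-2$. The strategy is to build an ordered $k$-tuple whose realizing cycle would be forced to cross a minimum cut at a vertex that is already "used up" in the ordering.

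First I would take $S$ to be a minimum vertex cut of $G$, so $|S| = \kappa(G) \leq k-2$, and fix two distinct components $A$ and $B$ of $G-S$. Note $|V(G)| \geq k$ since $G$ is $k$-ordered. I would then choose the ordered $k$-tuple $(v_1, v_2, \ldots, v_k)$ as follows: pick $v_1 \in A$, pick $v_2 \in B$, take $v_3, \ldots, v_{|S|+2}$ to be an enumeration of all vertices of $S$, and fill the remaining $k - |S| - 2$ positions $v_{|S|+3}, \ldots, v_k$ with arbitrary distinct vertices from $V(G) \setminus (S \cup \{v_1, v_2\})$. This is feasible because $|V(G) \setminus (S \cup \{v_1, v_2\})| = n - |S| - 2 \geq k - |S| - 2 \geq 0$.

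By $k$-orderedness, there exists a cycle $C$ containing $v_1, \ldots, v_k$ in this cyclic order. Let $P$ be the subpath of $C$ joining $v_1$ to $v_2$ that passes through none of $v_3, \ldots, v_k$; its internal vertices therefore lie in $V(G) \setminus \{v_1, \ldots, v_k\}$. Since $v_1 \in A$ and $v_2 \in B$ belong to different components of $G - S$, the path $P$ must contain at least one vertex of $S$ as an internal vertex. But by construction $S = \{v_3, \ldots, v_{|S|+2}\} \subseteq \{v_1, \ldots, v_k\}$, so no internal vertex of $P$ can be in $S$, a contradiction.

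There is no genuine obstacle beyond spotting the right ordering. The key trick is to "exhaust" the cut $S$ by placing all its vertices consecutively in the cyclic ordering between $v_2$ and $v_1$, which forces the $v_1 v_2$-segment of the realizing cycle to avoid $S$ and hence be unable to cross between components $A$ and $B$.
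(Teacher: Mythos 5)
Your proof is correct and follows essentially the same approach as the paper: place the two separated vertices first in the ordering and all cut vertices after them, so the $v_1v_2$-arc of the realizing cycle cannot cross the cut. The only (minor) difference is that you pad the ordered tuple with filler vertices when the cut has size less than $k-2$, whereas the paper pads the cut itself up to size $k-2$; your version is slightly more careful on this point.
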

\begin{proof}
If not, there exists a set $S$ of $k-2$ vertices whose removal disconnects $G$, breaking it into at least $2$ components. Take $2$ vertices $u,v$ in different components, then any path from $u$ to $v$ must go through some vertex of $S$. Thus, let $T$ consist of $u$, $v$ and then the vertices of $S$, in that order. These vertices must appear in some cycle in that order, giving a contradiction. \qed

\end{proof}

We will also need the concept of graph closure introduced by Bondy and Chvátal. Define
the closure of $G$, denoted $cl(G)$, to be the graph obtained by repeatedly joining any two nonadjacent vertices $x,y$ that satisfy $d(x) + d(y) \geq n$ in $G$. They showed that $cl(G)$ is well-defined (independent of the order in which nonadjacent vertex pairs are considered), and that $G$ is hamiltonian if and only if $cl(G)$ is also hamiltonian.

\begin{lemma}[\cite{Bondy1976}]\label{bondytheorem}
Suppose $cl(G) = G$ for a nonhamiltonian graph $G$ of order $n$. Then $d(x) + d(y) \leq n-1$ for any pair $\{x,y\}$ of  nonadjacent vertices.
\end{lemma}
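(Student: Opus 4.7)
The plan is essentially to unwind the definition of the closure operation and observe that the conclusion is immediate. Recall that $cl(G)$ is constructed by iteratively joining nonadjacent pairs $\{x,y\}$ with $d(x)+d(y)\geq n$ until no such pair remains; Bondy and Chv\'atal showed that the resulting graph is independent of the order of edge additions, so $cl(G)$ is well-defined. The hypothesis $cl(G)=G$ therefore asserts that starting from $G$ no edge can be added, i.e.\ that $G$ already satisfies the halting condition of the closure process.

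So I would simply argue by contrapositive on the definition. Suppose, toward a contradiction, that there is a nonadjacent pair $\{x,y\}$ with $d_G(x)+d_G(y)\geq n$. Then $x$ and $y$ form an eligible pair for the very first step of the closure procedure, and the edge $xy$ would be added, yielding a graph strictly larger than $G$ that is a subgraph of $cl(G)$. This contradicts $cl(G)=G$. Hence every nonadjacent pair satisfies $d_G(x)+d_G(y)\leq n-1$, which is the claim.

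The only conceptual subtlety is the well-definedness of $cl(G)$, but this is recorded in the excerpt right before the statement and may be quoted without reproof. I also note that the hypothesis that $G$ is nonhamiltonian plays no role in the present argument; it is included because the lemma is typically invoked in the nonhamiltonian setting (where the Bondy--Chv\'atal theorem additionally guarantees that $cl(G)$ itself is nonhamiltonian, making the closure assumption meaningful). There is no real obstacle in this proof; it is a direct transcription of the defining stopping rule for the closure, and the entire argument fits in a few lines.
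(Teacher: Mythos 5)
Your argument is correct: the lemma is an immediate consequence of the stopping condition in the definition of closure, and your observation that the nonhamiltonicity hypothesis is not needed for this particular statement is also accurate. The paper itself gives no proof (it simply cites Bondy and Chv\'atal), treating the claim as following directly from the definition of $cl(G)$, which is exactly what you have written out.
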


This was later generalized to obtain results for higher order connectivity, the bounds now also involving the independence number. We define $$\sigma_{k}(G) = \min \{\sum_{i=1}^{k} d(x_i), \{x_1, \ldots x_{k} \} \text{ an independent set of size k in G}\} $$

Note that $\sigma_1(G)$ simply corresponds to the minimum degree $\delta$, and Ore's theorem \cite{Ore1960} states that if $\sigma_2(G) \geq n$, then the graph is hamiltonian.

\begin{theorem}[\cite{Li2013}] \label{haotheorem}
Let $G$ be a $k$-connected graph of order $n$ and independence number $\alpha$. If $\sigma_{k+1}(G) \geq n+ (k-1) \alpha - (k-1)$, then $G$ is hamiltonian.
\end{theorem}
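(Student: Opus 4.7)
My plan is to argue by contradiction, combining the Chv\'atal--Erd\H{o}s theorem with a longest-cycle analysis in the style of Fan. First, if $\alpha(G)\le k$ then $\kappa(G)\ge k\ge\alpha(G)$, so Theorem~\ref{erdostheorem} already yields that $G$ is hamiltonian; we may therefore assume $\alpha\ge k+1$ and, for contradiction, that $G$ is not hamiltonian. As a normalisation, I would pass to the Bondy--Chv\'atal closure $cl(G)$: it retains $k$-connectivity, cannot increase $\alpha$, and cannot decrease $\sigma_{k+1}$, so all hypotheses persist, and by Lemma~\ref{bondytheorem} we may assume $d(x)+d(y)\le n-1$ for every nonadjacent pair $\{x,y\}$.

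\medskip

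Next, fix a longest cycle $C$ with an orientation, let $H$ be a component of $G-V(C)$, and choose $v\in V(H)$. The fan version of Menger's theorem yields $k$ internally vertex-disjoint paths $P_1,\dots,P_k$ from $v$ to $V(C)$ with distinct endpoints $u_1,\dots,u_k$; write $w_i=u_i^+$ for the successor of $u_i$ along $C$. The key structural claim is that $I:=\{v,w_1,\dots,w_k\}$ is an independent set of size $k+1$. An edge $vw_i$ would allow us to replace the edge $u_iw_i$ of $C$ by the detour $u_i\to P_i\to v\to w_i$, giving a cycle through $V(C)\cup V(P_i)\supsetneq V(C)$; an edge $w_iw_j$ would let us traverse one arc of $C$ from $w_i$ to $u_j$, use $P_j$ back to $v$ and $P_i$ forward to $u_i$, sweep the other arc back to $w_j$, and close via the chord $w_jw_i$, yielding a cycle that contains all of $V(C)$ together with $v$ and the internal vertices of $P_i,P_j$. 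Both cases contradict the maximality of $C$.

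\medskip

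Applying the hypothesis to $I$ then gives
\begin{equation*}
d(v)+\sum_{i=1}^{k}d(w_i)\ \ge\ \sigma_{k+1}(G)\ \ge\ n+(k-1)(\alpha-1),
\end{equation*}
and the contradiction will come from producing a matching upper bound. Two families of restrictions are available. First, for each $w_i$, a Fan-type rotation argument forbids the predecessor $x^{-}$ of every $x\in N(w_i)\cap V(C)$ from being a neighbour of $v$ via any $P_j$, with analogous restrictions linking $N(w_i)$ against $N(w_j)$; otherwise one extracts a strictly longer cycle. Second, the closure inequality $d(x)+d(y)\le n-1$ applies to every nonadjacent pair lying inside any maximum independent set $I^{*}\supseteq I$ of size $\alpha$. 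Balancing these two ledgers, weighting the $k+1$ members of $I$ against the further $\alpha-k-1$ vertices of $I^{*}\setminus I$, ought to yield $\sigma_{k+1}(G)\le n+(k-1)(\alpha-1)-1$, the desired contradiction.

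\medskip

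The main obstacle is making this last counting quantitatively sharp. A pure closure bound applied only to $I$ yields the far-too-weak estimate $\sigma_{k+1}(G)\le \tfrac{(k+1)(n-1)}{2}$, while the Fan restrictions on $C$ alone do not see $\alpha$. The coefficient $(k-1)$ multiplying $(\alpha-1)$ must be extracted from the $k-1$ ``extra'' cycle-based constraints contributed by each of the $\alpha-k-1$ additional independent vertices living in $I^{*}\setminus I$, and arranging the bookkeeping so that the Fan restrictions and the closure inequalities cooperate rather than over-count is where the technical work will concentrate.
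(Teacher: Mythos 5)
First, a point of reference: the paper does not prove this statement at all --- Theorem~\ref{haotheorem} is imported verbatim from \cite{Li2013} and used as a black box in the proof of Theorem~\ref{kconhamtheorem}. So there is no in-paper argument to compare yours against; your proposal has to stand on its own as a proof of the cited result.

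On its own terms, the proposal has a genuine gap. The setup is sound and standard: the reduction via Chv\'atal--Erd\H{o}s when $\alpha\le k$, the passage to the closure (your checks that $k$-connectivity is preserved, $\alpha$ does not increase, and $\sigma_{k+1}$ does not decrease are all correct), the fan from $v$ to a longest cycle $C$, and the independence of $I=\{v,w_1,\dots,w_k\}$ (modulo the small omitted check that $w_i\ne u_j$, which is handled by the same rotation). But everything up to that point is the boilerplate common to essentially all Chv\'atal--Erd\H{o}s-type and Bondy-type degree-sum proofs; the entire content of the theorem lives in the step you defer. You need the matching upper bound $d(v)+\sum_i d(w_i)\le n+(k-1)\alpha-k$, and your text only asserts that balancing ``Fan restrictions'' against closure inequalities on a maximum independent set $I^*\supseteq I$ ``ought to yield'' it, while your own final paragraph concedes that the bookkeeping needed to extract the coefficient $(k-1)$ on $\alpha$ has not been arranged. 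As you note yourself, the naive closure bound on $I$ gives only $\tfrac{(k+1)(n-1)}{2}$, and the cycle-based restrictions alone never see $\alpha$; no mechanism is exhibited that converts the $\alpha-k-1$ extra vertices of $I^*\setminus I$ into $k-1$ units of degree deficit each, and it is not even clear that $I$ extends to a maximum independent set in a way compatible with the fan structure. Until that counting is actually carried out --- which is where the real work in \cite{Li2013} is --- this is a plausible strategy outline, not a proof.
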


\section{Proofs of the Results}

\begin{proof}[Proof of Theorem \ref{maintheorem}]
$ $\newline
We will first prove the bound for the regime $2 \leq k \leq \sqrt{n+3}$. \\Consider any $k$-cyclable graph with $\alpha(G) \geq k$. Then, let $S$ be a set of $k$ independent vertices, and consider the cycle containing it. This gives us a cycle of length at least $2k$, as any $2$ independent vertices are not adjacent to each other. Thus, we can assume $\alpha(G) \leq  k - 1$. Let the connectivity of the graph be $\kappa$. Using Theorem \ref{westtheorem}, it suffices to show

$$ \frac{\kappa(n+\kappa - \alpha)}{\alpha} \geq 2k   \iff n \geq 2k(\frac{\alpha}{\kappa}) + (\alpha - \kappa)$$

 \noindent As $k$-cyclable graphs are also $2$-cyclable, and thus $2$-connected, we must have $\kappa \geq 2$. Hence, it is sufficient to show the stronger inequality

$$ n \geq 2k (\frac{k-1}{\kappa}) + k - 3$$ which is always true when $$n \geq k^2-3 \iff k \leq \sqrt{n+3} $$
Note that if we only ask for an improvement of the form $c(G) \geq (1 + \gamma)k$ for some positive constant $\gamma < 1$, we can improve the range of $k$ for which the result holds. Once again, let $S$ be any set of at least $ \frac{(1+ \gamma)k}{2}$ many independent vertices, and consider the cycle containing $S$. This corresponds to a cycle containing at least $(1+ \gamma)k$ many vertices since any two independent vertices are not adjacent, and thus we get $\alpha < \frac{(1+ \gamma)k}{2}$. Similar to the previous argument, if the connectivity of the graph is $\kappa$, by Theorem \ref{westtheorem} it suffices to show

$$ \frac{\kappa(n+\kappa - \alpha)}{\alpha} \geq (1 +\gamma) k   \iff n \geq  (1 +\gamma) k  (\frac{\alpha}{\kappa}) + (\alpha - \kappa)$$

\noindent Using $\kappa \leq 2$ and  $\alpha < \frac{(1+ \gamma)k}{2}$, we are done as long as

$$ n \geq \frac{(1+\gamma)^2k^2}{4} + \frac{(1+\gamma)k}{2} - 2 \iff \frac{\sqrt{4n+9}}{1+ \gamma} \geq k$$
So the above argument only yields a linear improvement in $c(G)$ for $k$ up to around $2 \sqrt{n}$.\\

Now, suppose $2 \leq k \leq \frac{3n}{4} - 1$, and assume to the contrary that $c(G) < k+2$. We must have $k \geq 3$ as $2$-cyclable graphs are $2$-connected and hence have circumference at least $4$ for $n \geq 4$. By Theorem
 \ref{diractheorem}, we must have $\delta \leq \frac{k+1}{2}$. Moreover, $\alpha \leq \frac{k+1}{2}$ as otherwise we could simply take a cycle containing $\alpha$ many independent vertices. Consider a vertex $v$ with minimum degree $\delta$, with neighbourhood $N(v)$ satisfying $|N(v)| = \delta$. Now, choose $v$ and any $k-1$ vertices from $V  \backslash N[v]$, which is possible as long as $k-1 \leq n - 1 - \delta$. Then, any cycle containing these vertices must also contain some $2$ neighbours of $v$, giving $c(G) \geq k+2$, and we are done.

Thus, we must have $k + \delta > n $. Note that when $2 \leq k \leq \frac{3n}{4} -1$, $n \geq k+2$ if $n \geq 4$. So, we must either have $3\delta - 3 \leq k+1$ or $n + \delta - \alpha \leq k+1$, otherwise we are done by Theorem \ref{bauertheorem}.\\  The former inequality gives $\delta \leq \frac{k+4}{3}$, which gives $$n < k + \delta \leq \frac{4k+4}{3} \implies \frac{3n-4}{4} < k$$ a contradiction. Hence, we must have $\delta \geq \frac{k+5}{3}$, $\alpha \leq \frac{k+1}{2}$ giving $$ k+1 \geq n + \delta - \alpha \geq n + \frac{k+5}{3} - \frac{k+1}{2} = n + \frac{7-k}{6}$$ 

\noindent or equivalently,  $\frac{3n}{4} -1 \geq k \geq \frac{6n+1}{7}$, which is again a contradiction. \qed
\end{proof}

\noindent We now prove an analogous bound for the circumference of $k$-ordered graphs.

\begin{proof}[Proof of Theorem \ref{orderedtheorem}]
$ $\newline
We know that $k$-ordered graphs are also $k-1$ connected from Theorem \ref{schultztheorem}, thus $\kappa \geq k-1$. We also must have $\alpha \leq k-1$, as otherwise we can simply take $k$ independent vertices in any order to obtain a cycle of size at least $2k$, in which case we are done. Hence, $$\kappa \geq k-1 \geq \alpha$$ so by Theorem \ref{erdostheorem}, we have that $G$ is hamiltonian, and thus we are done in this case as well. \qed
\end{proof}

In fact, it is not hard to see that the $\min \{n, 2k\}$ bound  on the circumference is achieved for all $2 \leq k \leq n$. If $k > n/2$, simply consider the complete graph $K_n$ which is clearly $k$-connected, $k$-ordered, $k$-cyclable and has circumference $n$. If $k \leq n/2$, consider the complete bipartite graph $G = K_{k,n-k} = (A,B,E)$, which is $k$-ordered, and hence $k$-cyclable.  Indeed, take any sequence of $k$ distinct vertices $T = (v_1, v_2, \ldots, v_k)$. We construct a cycle containing $T$ in that order as follows.\\

Let $T_A$ be the set of vertices in $T$ and $A$, with $T_B$ being defined similarly. Then, for any $v \in T_A$, if the next vertex in the sequence $T$ is in $T_B$, then simply follow the edge joining them. Otherwise, first follow an edge to a vertex in $B \backslash T_B$, and then back to the next vertex which must have been in $T_A$. Follow the same procedure for vertices in $T_B$. At the end, follow the edge joining the first and last vertex. We cannot run out of vertices as the number of extra vertices outside $T_A$ in $A$ that are needed is at most $|T_B|$, and $|A|= k = |T_A| + |T_B|$. Similarly, $|B| = n -k \geq k =|T_A| + |T_B|$.\\

We now generalize a result by \cite{Byer2007} on the maximal number of edges in a $k$-connected nonhamiltonian graph, for $k=2,3$. We will need the following short lemma which appears in \cite{Byer2007}.

\begin{lemma}[\cite{Byer2007}] \label{byertheorem}
Let $G$ be a nonhamiltonian, $k$-connected graph of order $n$. Then $k \leq \frac{n-1}{2}$ and $|E(\baro{G})| \geq \binom{k+1}{2} + (k-1)(n-k-1) - \sigma_{k+1}(G)$
\end{lemma}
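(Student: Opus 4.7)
The plan is to prove the two parts of the lemma separately. For the inequality $k \leq (n-1)/2$, I would argue by contradiction: since $G$ is $k$-connected, $\delta(G) \geq k$, so if $k \geq n/2$ then $\delta(G) \geq n/2$, and Theorem \ref{diractheorem} gives $c(G) \geq \min\{2\delta, n\} = n$, making $G$ hamiltonian. This contradicts the hypothesis, hence $k < n/2$; as $k$ is an integer this yields $k \leq (n-1)/2$.

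For the lower bound on $|E(\baro{G})|$, the idea is to exhibit enough non-edges by focusing on a well-chosen independent set. Since $G$ is nonhamiltonian and $k$-connected, the contrapositive of Chvátal--Erdős (Theorem \ref{erdostheorem}) forces $\alpha(G) > \kappa(G) \geq k$, so $G$ admits an independent set of size $k+1$. I would pick one realizing the minimum, $I = \{x_0, \ldots, x_k\}$ with $\sum_{i=0}^{k} d(x_i) = \sigma_{k+1}(G)$.

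It then suffices to count the edges of $\baro{G}$ incident to $I$. Since $I$ is independent in $G$, the $\binom{k+1}{2}$ pairs inside $I$ each contribute an edge of $\baro{G}$. For the crossing edges, every neighbor (in $G$) of each $x_i$ lies in $V \setminus I$, so $x_i$ has exactly $(n-k-1) - d(x_i)$ non-neighbors in $V \setminus I$; summing gives $(k+1)(n-k-1) - \sigma_{k+1}(G)$ crossing edges of $\baro{G}$. Combining the two disjoint contributions yields
\begin{equation*}
|E(\baro{G})| \;\geq\; \binom{k+1}{2} + (k+1)(n-k-1) - \sigma_{k+1}(G),
\end{equation*}
which is in fact stronger than the stated bound; discarding the extra $2(n-k-1)$ term gives the inequality as claimed. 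The only nontrivial input is producing the $(k+1)$-independent set via Chvátal--Erdős; after that, the remainder is a clean incidence count, so there is no serious obstacle.
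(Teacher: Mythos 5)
Your proof is correct, and for the main inequality it is essentially the paper's own argument: produce an independent set $I$ of size $k+1$ via the contrapositive of Chv\'atal--Erd\H{o}s (Theorem \ref{erdostheorem}), choose it to realize $\sigma_{k+1}(G)$, and count the $\binom{k+1}{2}$ non-edges inside $I$ plus the $(k+1)(n-k-1)-\sigma_{k+1}(G)$ non-edges with exactly one endpoint in $I$. Two remarks. First, the stronger bound you arrive at, with coefficient $k+1$ rather than $k-1$, is exactly what the paper's proof computes and what is later invoked in the proof of Theorem \ref{kconhamtheorem}; the $(k-1)$ in the lemma's statement is evidently a typo, so you were right not to be troubled by the discrepancy, and discarding the extra $2(n-k-1)\ge 0$ non-edges is a valid way to recover the literal statement. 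Second, for $k\le\frac{n-1}{2}$ you take a genuinely different route: the paper notes that deleting the $n-(k+1)$ vertices of $V(G)\setminus I$ disconnects $G$ (since $I$ is an independent set of size at least $2$), so $k$-connectivity forces $n-(k+1)\ge k$; you instead combine $\delta(G)\ge k$ with Theorem \ref{diractheorem}. Your version is fine but implicitly needs $k\ge 2$ and $n\ge 3$ for Dirac's theorem to apply, whereas the paper's disconnection argument carries no such caveats and reuses the set $I$ you have already constructed; neither difference affects the cases that matter.
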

\begin{proof}
By Theorem \ref{erdostheorem}, $k$-connected nonhamiltonian graphs must contain an independent set $I = \{x_1, \ldots , x_{k+1} \}$ of $k+1$ vertices. The graph is disconnected on removal of the the $n -(k+1) $ vertices of $G - I$, thus we must have
$ n - (k+1) > k-1$, or $k \leq \frac{n-1}{2}$.

Now consider the independent set $I$ satisfying $\sum_{i=1}^{k+1} d(x_i) = \sigma_{k+1}(G)$. Let the edges in $\baro{G}$ incident on at least one vertex of $I$ be denoted $X_I$. Then $X_I$ contains $\binom{k+1}{2}$ edges with both endpoints in $I$ and $\sum_{i=1}^{k+1} (n-1 - k -  d_G(x_i))$ edges with exactly one endpoint in $I$. Thus, we obtain
$$ \, \, \quad \quad  \quad\quad \quad \quad \quad \quad  \quad\quad |E(\baro{G})| \geq |X_I| = \binom{k+1}{2} +   (k-1)(n-k-1) - \sigma_{k+1}(G) \quad \quad \quad  \quad  \quad    \, \, \, \, \, \qed $$  
\end{proof}
Using a slight variation of the above result and Lemma \ref{bondytheorem}, \cite{Byer2007} also show the following result.

\begin{lemma}[\cite{Byer2007}] \label{byertheorem2}
Suppose $G = cl(G)$ for a nonhamiltonian graph $G$ of order $n$, and $m \leq \alpha(G)$. Then

\begin{equation*}
  |E(\baro{G})| \geq
    \begin{cases}
      \frac{m}{2}(n-m) & \text{for n odd}\\
      \frac{m}{2}(n-m) + \frac{m}{2} - 1 & \text{for n even}
    \end{cases}       
\end{equation*}
\end{lemma}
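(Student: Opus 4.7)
The plan is to adapt the edge-counting argument of Lemma \ref{byertheorem} to the closure-based hypothesis rather than a connectivity hypothesis. Since $m \leq \alpha(G)$, I would fix an independent set $I = \{x_1, \ldots, x_m\} \subseteq V(G)$. Every pair in $I$ is nonadjacent in $G$, so Lemma \ref{bondytheorem} gives $d_G(x_i) + d_G(x_j) \leq n - 1$ for all $i \neq j$; passing to the complement, $d_{\overline{G}}(x_i) + d_{\overline{G}}(x_j) \geq n - 1$ for every pair in $I$.

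Next, I would count the edges of $\overline{G}$ incident to $I$. Since $I$ is a clique in $\overline{G}$, letting $D = \sum_{i=1}^m d_{\overline{G}}(x_i)$, the sum $D$ double-counts the $\binom{m}{2}$ edges inside $I$ and single-counts the edges from $I$ to $V \setminus I$, so $|E(\overline{G})| \geq D - \binom{m}{2}$. Summing the degree-sum inequality over all pairs in $I$ yields $(m-1)D \geq \binom{m}{2}(n-1)$, hence $D \geq \frac{m(n-1)}{2}$. Substituting back gives $|E(\overline{G})| \geq \frac{m(n-m)}{2}$, which settles the case of odd $n$.

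For even $n$, $\frac{n-1}{2}$ is not an integer, so the $a_i := d_{\overline{G}}(x_i)$ cannot all equal $\frac{n-1}{2}$ and a sharper lower bound on $D$ is available. The claim is that, subject to the integer constraints $a_i + a_j \geq n - 1$, one has $\sum_i a_i \geq \frac{mn}{2} - 1$. Indeed, if two distinct indices satisfied $a_i, a_j \leq \frac{n-2}{2}$, then $a_i + a_j \leq n - 2 < n - 1$, a contradiction; hence at most one $a_i$ equals $\frac{n-2}{2}$ and the remaining $m-1$ values are at least $\frac{n}{2}$, forcing $D \geq \frac{n-2}{2} + (m-1)\cdot\frac{n}{2} = \frac{mn}{2} - 1$. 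Plugging this into the previous inequality yields $|E(\overline{G})| \geq \frac{m(n-m)}{2} + \frac{m}{2} - 1$, as desired.

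The main obstacle is the integrality refinement for even $n$: the double-counting and the averaged Bondy--Chv\'atal bound are routine, but pinning down the extra $\frac{m}{2} - 1$ edges requires one to exploit that $n-1$ is odd in this regime and to verify carefully that no other degree configuration on $I$ can beat the extremal profile $(a_1,\dots,a_m) = \bigl(\frac{n-2}{2}, \frac{n}{2}, \ldots, \frac{n}{2}\bigr)$.
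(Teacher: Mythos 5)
The paper does not actually prove this lemma --- it is quoted from Byer and Smeltzer \cite{Byer2007} with only the remark that it follows from ``a slight variation'' of Lemma \ref{byertheorem} together with Lemma \ref{bondytheorem}. Your argument is precisely that variation, and it is essentially correct: the complement degree-sum bound $d_{\overline{G}}(x_i)+d_{\overline{G}}(x_j)\ge n-1$, the count $|E(\overline{G})|\ge D-\binom{m}{2}$, and the averaging step giving $D\ge \frac{m(n-1)}{2}$ are all right, and they settle the odd case (indeed the first bound for all $n$).

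The one place the written argument does not quite close is the integrality refinement for even $n$. From ``at most one $a_i$ is at most $\frac{n-2}{2}$'' you conclude $D\ge \frac{n-2}{2}+(m-1)\frac{n}{2}$, but this tacitly assumes the exceptional degree is at least $\frac{n-2}{2}$; nothing rules out, say, $a_1=\frac{n}{2}-5$, in which case the displayed inequality chain breaks (the bound $D\ge (m-1)\frac{n}{2}$ that survives is weaker than $\frac{mn}{2}-1$). The conclusion is still true, and the clean fix avoids the case analysis entirely: order $a_1\le a_2\le\cdots\le a_m$, use $a_1+a_2\ge n-1$ for the two smallest, and $a_i\ge a_2\ge\lceil\frac{n-1}{2}\rceil=\frac{n}{2}$ for each $i\ge 3$, giving
\begin{equation*}
D \;\ge\; (n-1) + (m-2)\tfrac{n}{2} \;=\; \tfrac{mn}{2}-1 ,
\end{equation*}
which is exactly what you need. (This also shows why larger values of $a_2$ cannot help: any decrease in $a_1$ forces an at least equal increase in each of $a_2,\dots,a_m$.) With that one line repaired, your proof is complete and is the argument the paper's citation alludes to.
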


\noindent With the above results, we are ready to proceed to the proof of Theorem \ref{kconhamtheorem}. The idea is that if $n$ is not that much bigger than $\alpha$, then we can get a sufficient lower bound on $|E(\baro{G})| $ using Lemma \ref{byertheorem2}. Otherwise, $n$ is much bigger than $\alpha$, and we can use Theorem \ref{haotheorem} and Lemma \ref{byertheorem}. To show the uniqueness of the extremal graphs, we will make use of the fact that these graphs must satisfy Lemma \ref{bondytheorem} \textit{maximally}, i.e., addition of any further edge causes a violation of the condition.

\begin{proof}[Proof of Theorem \ref{kconhamtheorem}]
$ $\newline \
First of all, assume $k \geq 2$ as we already know that when $|E(G)| > \binom{n-1}{2} + 1$, then $G$ is hamiltonian and consequently connected as well.
Assume $G$ is nonhamiltonian. We may assume $G = cl(G)$, in which case $d(x)+ d(y) \leq n-1$ for any two nonadjacent vertices $x,y$, from Lemma \ref{bondytheorem}. It suffices to prove that $$|E(\baro{G})| \geq \binom{n}{2} - \left( \binom{n-k}{2} + k^2\right) = k\cdot n - \frac{3k^2+k}{2} $$ Note first that if $\sigma_{k+1}(G) \leq n + k^2- k -1$, by Lemma \ref{byertheorem}
$$|E(\baro{G})| \geq \binom{k+1}{2} + (k+1)(n-k-1) - (n+k^2 - k - 1) = k \cdot n -  \frac{3k^2+k}{2} $$
as desired. We now assume $\sigma_{k+1}(G) \geq n + k^2 - k$ and show that in this case, $|E(\baro{G})|$ is \textit{strictly} greater than $k\cdot n - \frac{3k^2+k}{2}$. We will divide the problem into two cases, depending on the size of $n$ compared to $\alpha$.\\

\noindent \B{Case 1:} Assume $n > \frac{ (k^2-1) \cdot \alpha  + y}{k}$, where $y = \frac{-k^3 + 4k^2 + 3k + 2}{2}$.

Let $I = \{x_1,x_2,\ldots ,x_{k+1}\}$ be a set of $k+1$ independent vertices satisfying $\sum_{i=1}^{k+1}d(x_i) = \sigma_{k+1}(G)$, and assume without loss of generality that $$d(x_1) \geq \frac{\sigma_{k+1}(G)}{k+1} \geq \frac{n+k^2-k}{k+1}$$ 

\B{Subcase 1a:} Suppose $d(x_1) \geq n - 2k$. Note that $V(G) - I - N(x_1)$ is non-empty, as otherwise we would have $d(x_1) = n - k -1$, giving $d(x_i) \leq k$ for $2 \leq i \leq k+1$ as $d(x_1) + d(x_i) \leq n-1$ for $2 \leq i \leq k +1$.  This contradicts $\sigma_{k+1}(G) \geq n + k^2 - k$.  Thus, pick some $v \in V(G) - I - N(x_1)$, giving $d_{\baro{G}}(v) = n - 1 - d_G(v) \geq d_G(x_1) \geq n - 2k$. Therefore, $\baro{G}$ contains at least $n - 2k - |I| = n - 3k - 1$ edges with both endpoints not in $I$. Using the same bound we got in Lemma \ref{byertheorem} but also including the extra edges in $\baro{G}$ incident with $v$ (that have no endpoint in $I$) and using Theorem \ref{haotheorem}, we obtain
\begin{align*}
|E(\baro{G})| &\geq \binom{k+1}{2} + (k+1)(n-k-1) + (n - 3k - 1) - \sigma_{k+1}(G)\\ 
& \geq (k+2)\cdot n - \frac{k^2+9k+4}{2} - (n + (k-1) \alpha - k)\\
& > k\cdot n -  \frac{3k^2+ k}{2} +  \frac{3k^2+ k}{2} -  \frac{k^2+9k+4}{2} + k +  \frac{ (k^2-1) \cdot \alpha  + y}{k} - (k-1)\alpha\\
& =  (k\cdot n -  \frac{3k^2+ k}{2}) +  \frac{  (k-1)\cdot \alpha  + y + k(k^2-3k-2)}{k}  >  (k\cdot n -  \frac{3k^2+ k}{2})
\end{align*}
as desired, where the last inequality follows from $y = \frac{-k^3 + 4k^2 + 3k + 2}{2}$.\\

\B{Subcase 1b:} Suppose next that $d(x_1) \leq n - 2k- 1$. Then there exist distinct vertices $v_1, v_2 \ldots, v_{k} \in V(G) - I - N(x_1)$, and $\baro{G}$ contains at least $$(d_{\baro{G}}(v_1) - k - 1) + (d_{\baro{G}}(v_2) - k - 2) + \cdots + (d_{\baro{G}}(v_k) - 2k) = \sum_{i=1}^k d_{\baro{G}}(v_i) - \frac{3k^2 + k}{2}$$ edges with neither endpoint in $I$. Using $d(v_i) + d(x_1) \leq n-1$ as $G = cl(G)$, we get $d_{\baro{G}}(v_i) \geq d_{G}(x_1) \geq \frac{n + k^2 -k}{k+1}$ for all $1 \leq i \leq k$. Consequently, we obtain at least $$\frac{k(n+k^2-k)}{k+1} - \frac{3k^2+k}{2}$$ edges in $\baro{G}$ with neither endpoint in $I$. Using Theorem \ref{haotheorem} and Lemma \ref{byertheorem} again, we get

\begin{align*}
|E(\baro{G})| &\geq \binom{k+1}{2} + (k+1)(n-k-1) + \frac{k(n+k^2-k)}{k+1} - \frac{3k^2+k}{2} - (n + (k-1)\alpha - k)\\
& = (kn  - \frac{3k^2+k}{2}) + \frac{k}{k+1}n -(k-1)\alpha +  \binom{k+1}{2} -(k+1)^2 + \frac{k(k^2-k)}{k+1} + k\\
&> (kn  - \frac{3k^2+k}{2})  + \frac{k}{k+1} \frac{(k^2-1)\alpha + y}{k} - (k-1)\alpha + \frac{-k^2-k-2}{2} + \frac{k(k^2-k)}{k+1}  \\
& = (kn  - \frac{3k^2+k}{2}) + \frac{1}{k+1} \left( \frac{-k^3+4k^2+3k+2}{2}+ \frac{(-k^2-k-2)(k+1)}{2}  + k^3 - k^2\right) \\
& = kn - \frac{3k^2+k}{2}
\end{align*} as desired.\\

\noindent  \B{Case 2:} Assume $n \leq \frac{(k^2-1)\alpha + y}{k}$.\\
In this case, $\alpha \geq \frac{nk - y}{k^2-1}$. By Lemma \ref{byertheorem2}, $|E(\baro{G})| \geq \frac{1}{2} \alpha(n- \alpha)$. This is a upward facing parabola for fixed $n$, so for $\frac{nk - y}{k^2-1} \leq \alpha \leq n - \frac{nk - y}{k^2-1}$, this function is minimized at $\alpha = \frac{nk - y}{k^2-1}$. Therefore, in this range

\begin{align*}
|E(\baro{G})| &\geq \frac{\alpha}{2} (n - \alpha) \geq \frac{1}{2} (\frac{nk - y}{k^2-1})(\frac{n(k^2-k-1) +y}{k^2-1})\\
& = \frac{ n^2k(k^2 - k - 1) + n(2k+1-k^2)y - y^2)}{2(k^2-1)^2} 
\end{align*}

\noindent If we want the above to be strictly greater than $kn - \frac{3k^2+k}{2} $, $$\frac{ n^2k(k^2 - k - 1)}{2(k^2-1)^2}  \geq kn \iff n \geq \frac{2(k^2-1)^2}{k^2 - k - 1} = 2(k^2 + k + \frac{1-k}{k^2-k-1})$$ suffices. This is because for $k \geq 5$, $y = \frac{-k^3 + 4k^2 3k+2}{2} < 0$ and $2k+1 - k^2 < 0$, giving $(2k+1-k^2)(y) > 0$. Similarly, $ -y^2 = \frac{(-k^3 + 4k^2 + 3k+2)^2}{4} > -(3k^2+k)(k^2-1)^2$ for $k \geq 5$, so we only have to check the cases of $k = 2,3,4$ manually which is a routine check.\\

Now, it remains to consider the possiblility that $\alpha > n - \frac{nk - y}{k^2-1} = \frac{ n(k^2-k-1) +y}{k^2-1}$. In this case however,  $\alpha$ is quite large compared to $n$, so the $\binom{\alpha}{2}$ edges in $\baro{G}$ between the vertices of an independent set of size $\alpha$ is strictly greater than $ k \cdot n - \frac{3k^2 + k}{2}$ for all $n$. Indeed, we manually verify for $k \leq 3$, and for $k \geq 4$ simply note that $\frac{nk}{2} + y \geq 0$, and hence when $n \geq 2(k^2+k)$ we have $$\alpha > \frac{n(k^2 - \frac{3k}{2} -1) }{k^2 -1} \geq \frac{9n}{15}, \quad \binom{ 9n/15}{2} > \frac{9n}{30} \cdot \frac{8n}{15} > kn$$

We now prove that the extremal  nonhamiltonian $k$-connected graphs  are unique for $n \geq 2(k^2+k)$, by making use of Lemma \ref{bondytheorem}. Recall that we may assume  $G = cl(G)$ is a
nonhamiltonian, $k$-connected graph of order $n \geq 2k^2 + 2k$ with $\sigma_{k+1}(G)=n+k^2 - k - 1$ as equality only holds if all the inequalities in the above proof are tight.\\

Thus, all the edges in $\baro{G}$ have atleast one endpoint in $I$. Let $I =\{x_1, x_2, \ldots, x_{k+1}\}$ be a set of independent vertices such that $k \leq d(x_1) \leq \ldots \leq d(x_{k+1}) $. Note that $k$-connected graphs have minimum degree at least $k$ as otherwise, the graph could be disconnected by removing at most $k-1$ vertices.  As mentioned in the previous section, we may further assume that all edges in $\baro{G}$ have at least one endpoint in $I$, that is, if $x,y \in V(G) - I$, then $\{x,y\} \in E(G)$. We will now use the properties of graph closure repeatedly. First, note that we must have a clique on the remaining $n - k - 1$ vertices, each of which has degree at least $n - k -2$.
\begin{itemize}
    \item Say $d(x_k) \geq k + 1$  Consider the neighbours of $x_k$ in the clique. These neighbours have degree at least $n - k - 1$, and hence since $G = cl(G)$, must be adjacent to $x_{k+1}$ as well as $d(x_{k+1}) \geq k+1$, But then, these neighbours have degree at least $n - k$, and hence must be adjacent to all of $x_1, \ldots, x_{k+1}$ by the same argument. 
    Thus, $I$ and $N(I)$ together form a complete bipartite graph with $|N(I)| \geq k+1 = |I|$. If $d(x_{k+1}) > k+1$, then it is easy to see that the graph is hamiltonian, and otherwise $k+1 = d(x_i) \, \forall i \in [k+1]$, giving $$ \sigma_{k+1} = n+ k^2 - k -1 =  (k+1)^2 \iff n = 3k+2$$ which is false as we assumed $n \geq 2k^2 + 2k$.
    
    \item Otherwise $d(x_k) = k$, 
    , and hence $d(x_{k+1}) = \sigma_{k+1} - k^2 = n - k - 1$, so we have a clique on the $n- k$ vertices in $G \backslash  \{x_1, \ldots, x_{k}\}$.  The neighbours of any $x_i, i \in [k]$ must have degree at least $n-k$, and hence are joined to all the $x_i$. Thus, we obtain the desired extremal graph with exactly $\binom{n-k}{2} + k^2$ many edges, namely a clique on $n-k$ vertices and $k$ other independent vertices forming a complete bipartite graph with some $k$ vertices from the clique. \qed
\end{itemize} 
\end{proof}

\section{Concluding Remarks}

A simpler proof of Theorem \ref{maintheorem} with a weaker constant can be obtained using Tur\'an's theorem and a theorem of Erd\H{o}s and Gallai \cite{Erdos1959} on the length of the longest cycle in a graph. 
Consider any $k$-cyclable graph with $\alpha(G) \geq k$. Then, let $S$ be a set of $k$ independent vertices, and consider the cycle containing it. This gives us a cycle of length atleast $2k$, as any two independent vertices are not adjacent to each other. Thus, we must have $\alpha(G) < k$. By a variant of Tur\'an's theorem, we also have $\alpha > \frac{n}{\tilde{d} + 1}$, where $\tilde{d}$ is the average degree. Thus, we obtain

$$\frac{2|E(G)|}{n} + 1 = \tilde{d} + 1 > \frac{n}{\alpha} \geq \frac{n}{k-1} \implies |E(G)| \geq \frac{1}{2} n \left( \frac{n}{k-1} - 1 \right)$$

which is larger than $\frac{1}{2} (2k-1)(n-1)$ if $n \geq 2 k^2$. giving $c(G) \geq 2k$ when $k \leq \sqrt{n/2}$.\\

It is also interesting to understand what happens to the circumference of $k$-cyclable graphs for large values of $k$. As mentioned earlier in the introduction, it is not necessarily the case that $c(G) = n$ when $k = n-1$ due to the existence of hypohamiltonian graphs. Thus, we have the following extremal problem. 

\begin{conjecture}
For a given $n$, let $f(n)$ be the largest value of $k$ such that any $k$-cyclable graph satisfies $c(G)>k$. From the above, we have $f(n) < n-1$ and from Theorem \ref{maintheorem}, we know $f(n) = \Omega(n)$. Is it the case that $f(n) = n-2 ?$
\end{conjecture}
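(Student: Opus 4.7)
My plan is to prove $f(n) \geq n-2$, i.e.\ every $(n-2)$-cyclable graph $G$ on $n$ vertices has circumference at least $n-1$. Since the upper bound $f(n) \leq n-2$ is already established via hypohamiltonian graphs, this would settle the conjecture. I argue by contradiction: suppose $G$ is $(n-2)$-cyclable with $c(G) \leq n-2$. The first step is a reformulation: for any pair $\{u, v\} \subseteq V(G)$, applying $(n-2)$-cyclability to $V(G) \setminus \{u, v\}$ produces a cycle containing these $n-2$ vertices; since $c(G) \leq n-2$, that cycle has length exactly $n-2$ and is therefore a Hamilton cycle of $G - u - v$. Hence the working hypothesis becomes: $G$ is non-Hamiltonian, $c(G) = n-2$ exactly, and $G - u - v$ is Hamiltonian for every pair $\{u, v\}$. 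This \emph{pair-deletion Hamiltonicity} is strictly stronger than hypohamiltonicity and is the main structural handle.

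Now fix a longest cycle $C = v_1 v_2 \cdots v_{n-2} v_1$ and let $x, y$ be the two vertices outside $C$. Standard chord-rotation arguments immediately restrict the attachment of $\{x, y\}$ to $C$: neither $x$ nor $y$ can have two consecutive neighbours on $C$ (else $C$ extends to an $(n-1)$-cycle through that vertex), and if $x \sim y$ one cannot simultaneously have $x \sim v_i$ and $y \sim v_{i+1}$ for any $i$ (else $G$ has a Hamilton cycle). Further Chv\'atal--Erd\H{o}s-style splicings on $4$-tuples of chords between $\{x, y\}$ and $C$ constrain which pairs of indices can be attachment points at all, and show that the combined neighbourhood $N_C(x) \cup N_C(y)$ is a fairly sparse subset of $\{v_1, \ldots, v_{n-2}\}$.

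The heart of the argument is to combine these restrictions with the Hamilton cycles $H_{ij}$ of $G - v_i - v_j$ guaranteed by the hypothesis. Each $H_{ij}$ is a cycle of length $n-2$ in $G$ passing through $x, y$ and all of $V(C) \setminus \{v_i, v_j\}$. The plan is to choose $\{i, j\}$ so that $v_i$ has two neighbours appearing consecutively on $H_{ij}$: if $p, q$ are consecutive on $H_{ij}$ and both adjacent to $v_i$, replacing the edge $pq$ with the path $p v_i q$ produces an $(n-1)$-cycle in $G$, contradicting $c(G) = n-2$. The existence of such a consecutive pair should follow from a pigeonhole over the $\binom{n}{2}$ pair-deletions, since collectively the family $\{H_{ij}\}$ encodes a great deal of adjacency information about $G$, and the restrictions on the attachments of $x, y$ to $C$ force most of the action to take place along $C$ itself.

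The main obstacle will be executing this pigeonhole argument cleanly. The positions of $x, y$ inside each $H_{ij}$ are not tied to one another across different choices of $\{i, j\}$, so extracting a uniform contradiction requires either a subtle averaging across all pair-deletions, or a structure theorem for graphs in which every two-vertex deletion is Hamiltonian but the graph itself is not. Establishing such a structure theorem --- i.e.\ showing that pair-deletion Hamiltonicity plus non-Hamiltonicity forces the existence of an $(n-1)$-cycle --- is, I believe, the deepest part of the problem; the natural averaging inequalities turn out to be tight precisely on the "almost hypohamiltonian" configurations one needs to rule out, and the absence of a fully developed pair-deletion analogue of the Bondy--Chv\'atal closure toolkit is presumably why this statement is posed as a conjecture rather than a theorem.
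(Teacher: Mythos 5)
This statement is posed in the paper as an open conjecture --- the authors supply no proof, and indeed they phrase it as a question ("Is it the case that $f(n)=n-2$?"), so there is no proof of record to compare against. Your proposal does not close the question either. The preliminary reformulation is sound: assuming $G$ is $(n-2)$-cyclable with $c(G)\le n-2$, every set $V(G)\setminus\{u,v\}$ lies on a cycle of length at least $n-2$, hence exactly $n-2$, so $G-u-v$ is Hamiltonian for every pair and $c(G)=n-2$. The local restrictions on the attachments of the two off-cycle vertices $x,y$ (no two consecutive neighbours on $C$, no "crossing" pair when $x\sim y$) are also correct and standard. But everything after that is a statement of intent rather than an argument.

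The genuine gap is the central step: you need to show that a non-Hamiltonian graph with $c(G)=n-2$ cannot have $G-u-v$ Hamiltonian for every pair $\{u,v\}$, and your proposed mechanism --- a pigeonhole over the family $\{H_{ij}\}$ producing a vertex $v_i$ with two neighbours consecutive on some $H_{ij}$ --- is never executed. You acknowledge yourself that the positions of $x,y$ in the various $H_{ij}$ are uncorrelated, that the natural averaging inequalities are tight exactly on the configurations you must exclude, and that what is really needed is a structure theorem for "pair-deletion Hamiltonian but non-Hamiltonian" graphs. That structure theorem \emph{is} the conjecture (in only slightly disguised form), so the proposal is circular at its core: the reduction repackages the problem without reducing its difficulty. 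It is also worth noting that the answer could in principle be negative --- a graph that is $(n-2)$-cyclable with circumference exactly $n-2$ would show $f(n)<n-2$ --- and your sketch offers no evidence ruling this out. As written, this is a reasonable research plan, not a proof.
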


\noindent We can also ask for what regime of $k$ as a function of $n$ do results of the type in Theorem \ref{maintheorem} hold.

\begin{conjecture}
For a given $n$, let $g(n)$ be the largest value of $k$ such that any $k$-cyclable graph satisfies $c(G)\geq 2k$. From Theorem \ref{maintheorem} we know $g(n) = \Omega(\sqrt{n})$. Is it the case that $g(n) = O(\sqrt{n}) ?$
\end{conjecture}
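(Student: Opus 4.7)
The plan is to establish $g(n)=O(\sqrt n)$ by exhibiting, for every sufficiently large $n$, an integer $k\le C\sqrt n$ together with an $n$-vertex $k$-cyclable graph whose circumference is strictly less than $2k$. This would complement the lower bound $g(n)=\Omega(\sqrt{n})$ already furnished by Theorem \ref{maintheorem}.

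The natural starting point is $K_{2k-1}$, which is $k$-cyclable (indeed $(2k-1)$-cyclable by Lemma \ref{kcontheorem}) and has circumference $2k-1$; this witnesses $g(2k-1)\le k-1$ but uses only $2k-1$ vertices, far too few. I would therefore try to construct a denser graph on $n\approx 2k^2$ vertices by a \emph{union of cliques} approach: take a family $\mathcal{F}=\{B_1,\dots,B_t\}$ of $(2k-1)$-subsets of $[n]$ forming a covering design $C(n,2k-1,k)$, so that every $k$-subset of $[n]$ lies inside some $B_i$, and set $G=\bigcup_i K_{B_i}$. Then $G$ is automatically $k$-cyclable: any $k$-subset $S$ sits inside some block $B_i$ of size $2k-1$, and within the clique $K_{B_i}$ it lies on a cycle of length at most $2k-1$. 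Existence of $\mathcal{F}$ with near-optimal size and constrained pairwise intersections is guaranteed by Keevash's existence theorem for designs, or alternatively by a R\"odl-nibble argument.

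The hard part will be controlling the global circumference of $G$, since a priori a cycle can hop between several blocks through shared vertices and grow much longer than $2k-1$. The second step is therefore to arrange $\mathcal{F}$ so that every cycle of $G$ is confined to a single block. This is the genuine obstruction: a pairwise-disjoint $\mathcal{F}$ cannot cover all $k$-subsets, while a generic covering design has enough overlap to support arbitrarily long block-hopping cycles. Two plausible routes: (i) an algebraic construction, for instance blocks drawn from lines of an affine plane $\mathrm{AG}(2,q)$ with $q=\Theta(\sqrt{n})$, whose rigid incidence pattern (pairwise meeting in at most one point) combined with an incidence-count argument might rule out long cycles; or (ii) a probabilistic argument showing that a uniformly random covering design produces no cycle of length $\ge 2k$ with positive probability. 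The tension between \emph{enough overlap to cover every $k$-subset} and \emph{too little overlap for long cycles to form} is precisely what appears to keep this question open. As a fallback, one could instead attempt to \emph{refute} the conjecture by using Theorem \ref{haotheorem} (the $\sigma_{k+1}$-type sufficient condition for hamiltonicity) to extend the valid range of $k$ in Theorem \ref{maintheorem} beyond $\sqrt{n+3}$.
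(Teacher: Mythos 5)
First, a point of order: the statement you are addressing is posed in the paper as an open question, not a theorem --- the paper offers no proof that $g(n)=O(\sqrt n)$, only the lower bound $g(n)=\Omega(\sqrt n)$ inherited from Theorem \ref{maintheorem}. So there is no argument of the authors to compare yours against, and your proposal is in any case a research plan rather than a proof: you yourself flag that the crucial step (bounding the circumference of your construction) is unresolved.

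More importantly, the specific construction you propose is provably doomed, not merely incomplete. If $\mathcal{F}$ is a covering design $C(n,2k-1,k)$, then for any pair of distinct vertices $u,v$ (and any $k\ge 2$) you may extend $\{u,v\}$ to a $k$-subset $S$; by the covering property $S$ lies in some block $B_i$, hence $u,v\in B_i$ and the edge $uv$ belongs to $K_{B_i}\subseteq G$. Thus $G=\bigcup_i K_{B_i}$ is simply the complete graph $K_n$, whose circumference is $n$, vastly exceeding $2k$. No choice of $\mathcal{F}$ --- affine planes, random designs, Keevash's theorem --- can avoid this, because the covering property alone forces every edge to be present; the ``tension'' you identify between coverage and overlap is not a tension at all but an immediate contradiction. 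The underlying misconception is that $k$-cyclability requires every $k$-set to sit inside a small dense piece of the graph; it only requires every $k$-set to lie on a common cycle, a global condition whose witnesses need not be cliques and need not be short once they start sharing vertices. Any genuine attack on the upper bound for $g(n)$ would have to produce $k$-cyclable graphs that are structurally obstructed from containing long cycles (for instance via small separators, as in the $K_2\lor 2K_k$ example from the introduction, suitably iterated), and it is exactly the difficulty of doing this for $k\gg\sqrt n$ that keeps the problem open. Your fallback suggestion of refuting the conjecture by pushing Theorem \ref{haotheorem} harder is at least coherent, but as stated it is only a direction, not an argument.
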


Moreover, our results only give an improvement of the form $c(G) \geq (1+ \gamma) k$, $0< \gamma < 1$, for $k$ up to around $2\sqrt{n}$, and it is natural to ask if such a linear bound on the circumference can be obtained for much larger regimes of $k$. Finally, note that the results of Theorem \ref{kconhamtheorem} only hold for $n \geq 2(k^2 + k)$. For fixed values of $k \leq 3$, \cite{Byer2007} give a tight bound for the minimum value of $n$ for this to hold. They also note that this bound cannot hold for $k = \Omega(n)$,  in particular if $p = \floor{\frac{n-1}{2}}$, the graph obtained by joining $n-p$ independent vertices to each vertex of $K_p$ is $k$-connected and nonhamiltonian, with total number of edges more than $\binom{n-k}{2} + k^2$ when $\frac{n+1}{6} < k  < \floor{\frac{n-1}{2}}$. This still leaves a significant gap in the possible range of $k$ for which $k$-connectivity and $|E(G)| > \binom{n-k}{2} + k^2$ implies hamiltonicity, as our result only applies for $k = O(\sqrt{n})$.


\
%
%
%


\bibliographystyle{splncs04}
 \bibliography{caldam}

\end{document}